\theoremstyle{plain}
\newtheorem{thm}{Theorem}[section]
\newtheorem{lem}[thm]{Lemma}
\theoremstyle{definition}
\newtheorem{ex}[thm]{Example}
\newtheorem{rem}[thm]{Remark}
\numberwithin{equation}{section}
\begin{document}

\title[Elliptic systems in exterior domains]{Nonzero radial solutions for elliptic systems with
coupled functional BCs in exterior domains}
\author[F. Cianciaruso]{Filomena Cianciaruso}%
\address{Filomena Cianciaruso, Dipartimento di Matematica e Informatica, Universit\`{a}
della Calabria, 87036 Arcavacata di Rende, Cosenza, Italy}
\email{cianciaruso@unical.it}
\author[G. Infante]{Gennaro Infante}%
\address{Gennaro Infante, Dipartimento di Matematica e Informatica, Universit\`{a} della
Calabria, 87036 Arcavacata di Rende, Cosenza, Italy}
\email{gennaro.infante@unical.it}
\author[P. Pietramala]{Paolamaria Pietramala}%
\address{Paolamaria Pietramala, Dipartimento di Matematica e Informatica, Universit\`{a}
della Calabria, 87036 Arcavacata di Rende, Cosenza, Italy}
\email{pietramala@unical.it}
\subjclass[2010]{Primary 35J66, secondary 45G15, 34B10}
\keywords{Elliptic system, fixed point index, cone, nontrivial solution, nonlinear functional boundary conditions.}
\begin{abstract}
We prove new results on the existence, non-existence, localization and multiplicity of nontrivial radial solutions of a system of elliptic boundary value problems on exterior domains subject to nonlocal, nonlinear, functional boundary conditions. Our approach relies on fixed point index theory. As a by-product of our theory we provide an answer to an open question posed by do {\'O}, Lorca, S\'{a}nchez and Ubilla. We include some examples with explicit nonlinearities in order to illustrate our theory.
\end{abstract}
\maketitle
\section{Introduction}
The existence of solutions of elliptic systems on exterior domains has been studied by a number of authors. Two interesting papers in this direction are the ones by do \'O and co-authors~\cite{do4, do6}, where results on the existence, non-existence
and multiplicity of positive solutions of the elliptic system with \emph{non-homogenous} boundary conditions (BCs)
\begin{equation} \label{do}
\begin{cases}
\Delta u + f_1(|x|,u,v)=0,\ |x|\in [1,+\infty), \\
\Delta v +f_2(|x|, u,v)=0,\, |x|\in [1,+\infty),\\
u(x)=a\,\,\text{for  }x\in \partial B_{1},\, \,\,\,\displaystyle \lim_{|x|\to+\infty}u(|x|)=0, \\
v(x)=b\,\,\text{for  }x\in \partial B_{1},\,\,\,\, \displaystyle\lim_{|x|\to+\infty}v(|x|)=0,
\end{cases}
\end{equation}
were given.
The methodology in~\cite{do4, do6} relies on a careful use of the Krasnosel'ski\u\i{}-Guo Theorem on cone compressions and cone expansions
combined with the upper-lower solutions method and the fixed point index theory. These papers follows earlier works by do \'O et al~\cite{do0} on annular domains with non-homogenous BCs, and by Lee~\cite{lee1} on Dirichlet BCs on exterior domains.
In~\cite{do6} (see Open Problem 3) the authors posed an interesting question regarding the existence of multiple positive solutions of the elliptic system \eqref{do} under more general BCs.

Here we study the existence and the multiplicity of \emph{nonzero} solutions of the system of nonlinear elliptic BVPs with \emph{nonlocal} and \emph{functional}
BCs
\begin{equation}\label{ellbvp}
\begin{cases}
\Delta u + h_1(|x|) f_1(u,v)=0,\,\,\,\,\, |x|\in [R_1,+\infty), \\
\Delta v + h_2(|x|) f_2(u,v)=0,\,\,\,\,\, |x|\in [R_1,+\infty),\\
u(R_1x)=\beta_1 u(R_\eta x)\,\,\text{for  }x\in \partial B_{1},\ \,\, \,\, \displaystyle\lim_{|x|\rightarrow +\infty} u(|x|)=H_1[u,v],\\
v(R_1x)=\delta_1 \frac{\partial v}{\partial r}(R_\xi x)\,\,\text{for  }x\in \partial B_{1}, \,\, \,\,\displaystyle\lim_{|x|\to+\infty}v(|x|)=H_2[u,v],%
\end{cases}
\end{equation}
where $x\in \mathbb{R}^n $, $n\geq 3$, $\beta_1, \delta_1\in \mathbb{R}$, $R_1>0$, $R_\eta, R_\xi \in (R_1,+\infty)$, $B_\rho=\{ x\in\mathbb{R}^n : |x|<\rho\}$, $\dfrac{\partial}{\partial r}$ denotes (as in ~\cite{nirenberg}) differentiation in the radial direction $r=|x|$ and $H_{i}$ are suitable compact functionals, not necessarily linear.

We stress that a variety of methods has been used to study the existence of solutions of elliptic equations subject to \emph{homogeneous} BCs on exterior domains; for example topological methods where employed by Lee~\cite{lee1}, Stanczy~\cite{sta},  Han and Wang~\cite{han}, do \'O and others~\cite{do1}, Abebe and co-authors~\cite{abeshi} and Orpel~\cite{orpel}, a priori estimates were utilised by Castro et al~\cite{shi5},  sub and super solutions were used by Sankar and others~\cite{shi4} and Djedali and Orpel~\cite{orpel3} and variational methods were used by Orpel~\cite{orpel2}.

In the context of non-omogeneous BCs, elliptic problems in exterior domains were studied by Aftalion and Busca ~\cite{busca} and do \'O and others~\cite{do4, do2, do6, do7},  nonlinear BCs were investigated by Butler and others~\cite{but}, Dhanya et al~\cite{shi2}, Ko and co-authors ~\cite{shi3}, Lee and others~\cite{shi1}.

In order to discuss the existence of nonzero solutions of the elliptic system \eqref{ellbvp}, we study the associated system of perturbed
Hammerstein integral equations
\begin{equation*}\label{intrsyst}
\begin{cases}
u(t)=\left(1+\frac{(\beta_1-1)t}{1-\beta_1 \eta}\right) H_{1}[u,v]+\int_{0}^{1}k_1(t,s)g_1(s)f_1(u(s),v(s))\,ds, \\
v(t)=\left(1-\frac{t}{1-\beta_2 }\right) H_{2}[u,v]+\int_{0}^{1}k_2(t,s)g_2(s)f_2(u(s),v(s))\,ds.%
\end{cases}
\end{equation*}

The existence of solutions of systems of (different kinds) of perturbed
Hammerstein integral equations has been studied, for example, in \cite{df-gi-do, Goodrich1, Goodrich2, gifmpp-cnsns, gipp-ns, gipp-nonlin, gipp-mmas,  kang-wei, kejde, ya1}.
When using Krasnosel'ski\u\i{}-type arguments for these kind of systems, one difficulty to be overcome is how to control on the growth of the perturbation. In the recent manuscript~\cite{genupa} the authors used \emph{local} estimates via \emph{linear} functionals. Here we also use local estimates, but with \emph{affine} functionals instead. This allows more flexibility when dealing with the existence results.

The paper is organized as follows: Section~\ref{two} is devoted to the existence and non-existence results for the system~\eqref{ellbvp}; in Section~\ref{three} we briefly illustrate how our theory allows us to deal with more general conditions than the ones present in~\eqref{do}, giving a positive answer to the  Open Problem~3 of~\cite{do6}.

Our methodology relies on classical fixed point index theory (see for example \cite{Amann-rev, guolak}) and also benefit of ideas from the papers~\cite{df-gi-do, gifmpp-cnsns, gipp-nonlin, gi-pp-ft, gijwjiea, lan-lin-na, lanwebb, webb, jw-gi-jlms}.

\section{A system of elliptic PDEs in exterior domains}\label{two}
We consider the system of boundary value problems
\begin{equation}\label{ellbvpsec}
\begin{cases}
\Delta u + h_1(|x|) f_1(u,v)=0,\,\,\,\,\,|x|\in [R_1,+\infty), \\
\Delta v + h_2(|x|) f_2(u,v)=0,\,\,\,\,\,|x|\in [R_1,+\infty),\\
u(R_1x)=\beta_1 u(R_\eta x)\,\,\text{for  }x\in \partial B_{1},\ \, \,\, \displaystyle\lim_{|x|\rightarrow +\infty} u(|x|)=H_1[u,v],\\
v(R_1x)=\delta_1 \frac{\partial v}{\partial \sigma}(R_\xi x)\,\,\text{for  }x\in \partial B_{1}, \, \,\,\displaystyle\lim_{|x|\to+\infty}v(|x|)=H_2[u,v],%
\end{cases}
\end{equation}
where $x\in \mathbb{R}^n $, $\beta_1, \delta _1 \in \mathbb{R}$, $R_1>0$, $R_\eta, R_\xi \in (R_1,+\infty)$.
We assume that, for $i=1,2$,
\begin{itemize}
\item $f_i:\mathbb{R}\times \mathbb{R} \to[0,+\infty)$  is continuous;
\item  $h_i:[R_1,+\infty) \to [0,+\infty)$ is continuous  and $h_i(|x|)\leq \frac{1}{|x|^{n+\mu_i}}$ for $|x|\to +\infty$ for some $\mu_i>0$.
\end{itemize}

Consider  in $\mathbb{R}^n$, $n\ge 3$, the equation
\begin{equation}\label{eqell}
\triangle w+ h(|x|)f(w) = 0, \quad |x|\in
[R_{1},+\infty).
\end{equation}
In order to establish the existence of radial solutions $w=w(r)$, $r=|x|$, we proceed as in \cite{but} and rewrite \eqref{eqell} in the form
\begin{equation}\label{eqinterm}
w''(r) + \dfrac{n-1}{r}w'(r) + h(r)f(w(r))= 0,  \,\,\,\,  r\in [R_{1}, +\infty).
\end{equation}
Set $w(t)=w(r(t))$, where
\begin{equation*}
r(t):=R_1\,t^{\frac{1}{2-n}},\,\,\,\,\,\,t\in[0,1]
\end{equation*}
and take, for $t\in[0,1]$,
\begin{equation*}
\phi(t):=r(t)\,\frac{R_1}{(n-2)^2}\,t^{\frac{2n-3}{2-n}},
\end{equation*}
then~\eqref{eqinterm} becomes
\begin{equation*}
w''(t) + {\phi}(t) h(r(t)) f(w(t)) = 0,\,\,\,\,\,t\in[0,1].
\end{equation*}

Set
$u(t)=u(r(t))$ and $v(t)=v(r(t))$. Thus to the system \eqref{ellbvpsec} we associate the system of ODEs
\begin{equation}\label{1syst}
\begin{cases}
u''(t) + g_1(t) f_1(u(t),v(t)) = 0,\,\,\,\, t\in (0,1), \\
v''(t) + g_2(t) f_2(u(t),v(t)) = 0, \,\,\,\,t\in  (0,1),\\
u(0)=H_{1}[u,v],\,\,\, u(1)={\beta}_1u({\eta}),  \\
v(0)=H_{2}[u,v],\,\,\, v(1)=\beta_2 v'(\xi),
\end{cases}
\end{equation}
where
$$
g_i(t):={\phi}(t) h_i(r(t)),\,\,\,\,\,\,\,\,\,\,\beta_2=(2-n)\delta_1 \frac{\xi}{R_\xi}
$$
and $\xi, \eta \in (0,1)$ are such that $r(\eta)=R_{\eta}$ and $r(\xi)=R_{\xi}$.

We study the existence of nontrivial solutions of the system~\eqref{1syst}, by means of the associated system of perturbed Hammerstein integral equations
\begin{equation}\label{syst}
\begin{cases}
u(t)=\bigl(1+\frac{(\beta_1-1)t}{1-\beta_1 \eta}\bigr) H_{1}[u,v]+\int_{0}^{1}k_1(t,s)g_1(s)f_1(u(s),v(s))\,ds, \\
v(t)=\bigl(1-\frac{t}{1-\beta_2 }\bigr) H_{2}[u,v]+\int_{0}^{1}k_2(t,s)g_2(s)f_2(u(s),v(s))\,ds,%
\end{cases}
\end{equation}
where the Green's functions $k_1$ and $k_2$ are given by
\begin{equation}\label{k1}
k_1(t,s):=\dfrac{t}{1-\beta_1 \eta} (1-s)-
\begin{cases}
\dfrac{\beta_1 t}{1-\beta_1 \eta}(\eta -s), &  s \le \eta,\\
\quad 0,& s>\eta,
\end{cases}
 - \begin{cases}
  t-s, &s\le t, \\
  \quad 0,&s>t,
\end{cases}
\end{equation}
and
\begin{equation}\label{k2}
k_2(t,s):=\dfrac{t}{1-\beta_2}(1-s)-
\begin{cases}
\dfrac{\beta_2 t}{1-\beta_2}, &  s \le \xi,\\
\quad 0,& s>\xi,
\end{cases}
 - \begin{cases} t-s, &s\le t, \\
  \quad 0,&s>t.
\end{cases}
\end{equation}

Due to the presence of the parameters $\beta_1, \beta_2, \eta, \xi,$ several cases can occur;  here we restrict our attention to the case
\begin{equation}\label{param}
1\leq \beta_1<\frac{1}{\eta} \quad \mbox{and}\quad  0\le\beta_2<1-\xi.
\end{equation}
The choice~\eqref{param} is due to brevity and to the fact that it enables us to illustrate the different behaviour of the solution $(u,v)$ in the two components:
$u$ is \emph{non-negative} on $[0,1]$ and  $v$ is positive on some sub-interval of $ [0,1]$ and is allowed to \emph{change sign} elsewhere.

In the following Lemma we summarize some properties of the Green's functions $k_1$ and $k_2$ that can be found in~\cite{discont, webb}.
\begin{lem} The following hold:{}
\begin{itemize}
 \item[(1)]  The kernel $k_1$ is non-negative and continuous in  $[0,1]\times  [0,1]$. Moreover we have
\begin{align*}
0\leq & k_1(t,s) \leq \Phi_1(s),\ \text{ for } (t,s)\in [0,1] \times [0,1],\\
&k_1(t,s) \geq {c}_{k_1}\Phi_1(s),\ \text{ for } (t,s)\in [a_1,b_1] \times [0,1],
\end{align*}
where we take
$$
\Phi_1(s)= \frac{\beta_1s(1-s)}{1-\beta_1\eta},
$$
an arbitrary $[a_1,b_1]\subset(0,1]$ and
$${c}_{k_1}=\min\{a_1\eta, 4a_1(1-\beta_1\eta)\eta,\eta(1-\beta_1\eta)\}.$$
 \item[(2)]  The kernel $k_2$ in  $[0,1]\times  [0,1]$ is non-positive for
$$
1-\beta_2\leq t\leq1\,\,\,\,\ \text{and}\,\,\,\,\ 0\leq s \leq \xi
$$
and is discontinuous on the segment \{$t\in [0,1], s=\eta$\}. Moreover we have
\begin{align*}
& |k_2(t,s)|\leq \Phi_2(s),\ \text{ for } (t,s)\in [0,1] \times [0,1],\\
& k_2(t,s) \geq {c}_{k_2}\Phi_2(s)\ \text{ for } (t,s)\in [a_2,b_2] \times [0,1],
\end{align*}
where we take
$$
\Phi_2(s)=\max\{1,\beta_2/\xi\}\,\dfrac{s(1-s)}{1-\beta_2},
$$
an arbitrary $[a_2,b_2]\subset (0,1-\beta_2)$ and
$$
{c}_{k_2}=\dfrac{\min\{4a_2(1-\beta_2-\xi),(1-b_2-\beta_2)\}}{\max\{1,\beta_2/\xi\}}.
$$
\end{itemize}
\end{lem}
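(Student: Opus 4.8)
The plan is to recognize $k_1$ and $k_2$ as the Green's functions of the two scalar linear problems sitting inside \eqref{1syst} once the functional data are switched off, namely $-u''=y$, $u(0)=0$, $u(1)=\beta_1 u(\eta)$ for $k_1$ and $-v''=y$, $v(0)=0$, $v(1)=\beta_2 v'(\xi)$ for $k_2$, and then to reduce every assertion to elementary estimates on explicit polynomials. First I would record, as in \cite{webb, discont}, that each kernel is a rank-one perturbation of the Dirichlet Green's function $G(t,s)=\min\{t,s\}-ts=t(1-s)-(t-s)\chi_{\{s\le t\}}$: one checks directly that
$k_1(t,s)=G(t,s)+\tfrac{\beta_1 t}{1-\beta_1\eta}\,G(\eta,s)$ and $k_2(t,s)=G(t,s)+\tfrac{\beta_2 t}{1-\beta_2}\,\partial_t G(\xi,s)$,
which expands exactly to \eqref{k1} and \eqref{k2}. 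In this framework the existence of a nonnegative $\Phi_i$ with $|k_i(t,s)|\le\Phi_i(s)$ on $[0,1]^2$ and $k_i(t,s)\ge c_{k_i}\Phi_i(s)$ on $[a_i,b_i]\times[0,1]$ is automatic; the actual work is to exhibit the explicit admissible choices stated, and this is a bookkeeping exercise over the sub-rectangles of $[0,1]^2$ cut out by the lines $s=t$, $s=\eta$ (resp.\ $s=\xi$) and $t=1-\beta_2$.

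For part (1) I would proceed branch by branch. On each closed sub-rectangle $k_1$ is polynomial, hence continuous, and the four branches of \eqref{k1} agree across $\{s=t\}$ and $\{s=\eta\}$, so $k_1\in C([0,1]^2)$; non-negativity of each branch follows from \eqref{param} (which makes $1-\beta_1\eta>0$), e.g.\ $k_1(t,s)=s+\tfrac{st(\beta_1-1)}{1-\beta_1\eta}\ge0$ on $\{s\le\min\{t,\eta\}\}$ since $\beta_1\ge1$. For the upper bound I would use that, for fixed $s$, $t\mapsto k_1(t,s)$ is continuous, piecewise affine with its only node at $t=s$, and concave, with $k_1(0,s)=0$; hence its maximum over $[0,1]$ is $\max\{k_1(s,s),k_1(1,s)\}$, and a short computation — using $\beta_1\ge1$ and, when $s\le\eta$, the inequality $1-s\ge1-\eta$ — shows both quantities are $\le\tfrac{\beta_1 s(1-s)}{1-\beta_1\eta}=\Phi_1(s)$. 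For the lower bound on $[a_1,b_1]$ the key structural point is that, unlike in the Dirichlet case, the nonlocal term makes $k_1(1,s)=\beta_1 k_1(\eta,s)>0$; combining this with concavity one bounds $k_1(t,s)$ below on each sub-rectangle and takes the minimum of the resulting constants to get $c_{k_1}$ (which, notably, does not involve $b_1$).

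Part (2) I would handle in the same way, now keeping track of sign. On $\{s\le\xi,\ s\le t\}$ one finds $k_2(t,s)=\tfrac{s(1-\beta_2-t)}{1-\beta_2}$, which is $\le0$ precisely when $t\ge1-\beta_2$ and $\ge0$ otherwise, while on the three remaining branches $k_2\ge0$ by \eqref{param} (using $1-\xi-\beta_2>0$); this gives the sign statement. The asserted discontinuity is read off \eqref{k2}: as $s$ crosses $\xi$ the second summand $\tfrac{\beta_2 t}{1-\beta_2}$ switches off, so $k_2(t,\xi^{+})-k_2(t,\xi^{-})=\tfrac{\beta_2 t}{1-\beta_2}\ne0$ for $t>0$ (this is the trace of the radial-derivative term $\delta_1\tfrac{\partial v}{\partial\sigma}(R_\xi x)$ in \eqref{ellbvpsec}). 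For the bounds, again $t\mapsto k_2(t,s)$ is concave and piecewise affine with node only at $t=s$ and $k_2(0,s)=0$, so $\sup_t k_2(t,s)=\max\{k_2(s,s),k_2(1,s)\}$ and $\inf_t k_2(t,s)=\min\{0,k_2(1,s)\}$; since $k_2(1,s)=-\tfrac{\beta_2 s}{1-\beta_2}$ for $s\le\xi$, using $\beta_2\le1-s$ (which holds by $\beta_2\le1-\xi$) and distinguishing the cases $\beta_2<\xi$ and $\beta_2\ge\xi$ shows $|k_2(t,s)|\le\max\{1,\beta_2/\xi\}\,\tfrac{s(1-s)}{1-\beta_2}=\Phi_2(s)$, the factor $\max\{1,\beta_2/\xi\}$ being exactly what absorbs the negative part when $\beta_2\ge\xi$. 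Since $[a_2,b_2]\subset(0,1-\beta_2)$ keeps $t$ inside the positivity region, the lower bound $k_2(t,s)\ge c_{k_2}\Phi_2(s)$ is meaningful, and the same branchwise minimisation yields the stated $c_{k_2}$, the term $1-b_2-\beta_2>0$ encoding the distance of $b_2$ from $1-\beta_2$.

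The only place where anything can go wrong is precisely this bookkeeping: checking that a single uniform envelope $\Phi_i(s)$ dominates $|k_i(t,s)|$ simultaneously on all pieces, and that the displayed minima defining $c_{k_i}$ are admissible uniformly in $s\in[0,1]$ and in $t$ on the chosen subinterval. The window \eqref{param} is used essentially at several of these steps — positivity of $1-\beta_1\eta$ and of $1-\xi-\beta_2$, the sign of $k_2$, and $\beta_2\le1-s$ for $s\le\xi$ — and for $k_2$ there is the added care of carrying the sign change across $t=1-\beta_2$ and the jump across $s=\xi$ through to $|k_2|$. None of it is deep, but it is the whole content of the Lemma and is the reason the verification is deferred to \cite{webb, discont}.
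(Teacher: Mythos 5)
The paper offers no proof of this lemma; it is explicitly stated as a summary of properties taken from \cite{discont, webb}, so there is no in-text argument to compare against. Your outline is precisely the argument of those references: write each kernel as the Dirichlet Green's function plus a rank-one correction, use that $t\mapsto k_i(t,s)$ is piecewise affine and concave with $k_i(0,s)=0$ and sole node at $t=s$ to reduce the sup and inf over $t$ to the values at $t=0$, $t=s$, $t=1$, and then check branches. The computations you do carry out are correct: the decompositions $k_1(t,s)=G(t,s)+\frac{\beta_1 t}{1-\beta_1\eta}G(\eta,s)$ and $k_2(t,s)=G(t,s)+\frac{\beta_2 t}{1-\beta_2}\partial_tG(\xi,s)$, the formulas $k_1(t,s)=s+\frac{st(\beta_1-1)}{1-\beta_1\eta}$ on $\{s\le\min\{t,\eta\}\}$ and $k_2(t,s)=\frac{s(1-\beta_2-t)}{1-\beta_2}$ on $\{s\le\min\{t,\xi\}\}$, and $k_2(1,s)=-\frac{\beta_2 s}{1-\beta_2}$ for $s\le\xi$; you also correctly read the statement's ``$s=\eta$'' as a typo for $s=\xi$. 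Two small corrections. First, your attribution of the factor $\max\{1,\beta_2/\xi\}$ is misplaced: the negative values of $k_2$ obey $|k_2(t,s)|\le\frac{\beta_2 s}{1-\beta_2}\le\frac{s(1-s)}{1-\beta_2}$ for $s\le\xi$ because $\beta_2<1-\xi\le 1-s$, so they never need the extra factor; what forces $\beta_2/\xi$ is the \emph{positive} value $k_2(1,s)=(1-s)\frac{\beta_2}{1-\beta_2}$ for $s>\xi$ when $\beta_2>\xi$, where one needs $\beta_2\le\max\{1,\beta_2/\xi\}\,s$. This affects only your commentary, not the bound. Second, the constants $c_{k_1},c_{k_2}$ are asserted rather than derived; the branchwise minimisation you describe does yield exactly them (for instance $4a_1(1-\beta_1\eta)\eta$ arises from the branch $t<s\le\eta$ via $k_1(t,s)\ge t\ge a_1$ together with $s(1-s)\le 1/4$ and $\beta_1<1/\eta$), so nothing is missing in principle, but a complete write-up would have to display these case-by-case inequalities.
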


We note that, for $i=1,2$,
$$
g_i\,\Phi_i \in L^1[0,1], \,\,\,g_i \geq 0\,\,\,
$$
and assume that
\begin{itemize}
\item $\int_{a_i}^{b_i} \Phi_i(s)g_i(s)\,ds >0$.
\end{itemize}

We denote by $C[0,1]$ the space of the continuous functions on $[0,1]$ equipped with the norm $\|w\|:=\max\{|w(t)|,\; t\in [0,1]\}$ and set
$$
\gamma_1(t):=1+\frac{(\beta_1-1)t}{1-\beta_1 \eta}\,\,\,\,  \text{ and }\,\,\,\,\gamma_2(t):=1-\frac{t}{1-\beta_2 }.
$$
We have, for $i=1,2$, $\gamma_{i} \in C[0,1]$ and
\begin{equation*}
\gamma _{i}(t)\geq {c}_{\gamma_i}\| \gamma _{i}\|,\;\,\,\,\,\,t\in [a_{i},b_{i}],
\end{equation*}%
$$
||\gamma_{1}||=\frac{\beta_1(1-\eta)}{1-\beta_1 \eta},\,\,\,\,\,
{c}_{\gamma_1}=\frac{\beta_1-1}{\beta_1(1- \eta)}a_1+\frac{1-\beta_1\eta}{\beta_1(1-\eta)},
$$
$$||\gamma_{2}||=\begin{cases} \frac{\beta_2 }{1-\beta_2}, &  \beta_2 \geq 1/2,\\ \quad 1,& \beta_2\leq 1/2,
\end{cases}
\,\,\,\,\,\, \mbox{ and }\,\,\,\,\,\,{c}_{\gamma_2}=\begin{cases} \frac{1-\beta_2 }{\beta_2}-\frac{b_2}{\beta_2}, &  \beta_2 \geq 1/2,\\ \quad 1-\frac{b_2}{1-\beta_2},& \beta_2\leq 1/2.
\end{cases}
$$
We
consider the space $C[0,1]\times C[0,1]$ endowed  (with abuse of notation) with the norm $$\| (u,v)\| :=\max \{\| u\| ,\| v\| \}.$$
Recall that a \emph{cone} $K$ in a Banach space $X$  is a closed convex set such that $\lambda \, x\in K$ for $x \in K$ and
$\lambda\geq 0$ and $K\cap (-K)=\{0\}$.
Due to the properties above, we can work in the cone $K$ in $C[0,1]\times C[0,1]$ defined by
\begin{equation*}
\begin{array}{c}
K:=\{(u,v)\in {K_{1}}\times {K_{2}}\},%
\end{array}%
\end{equation*}
where
\begin{equation*}
K_{1}:=\{w\in C[0,1]:\,\,w\geq 0,\ \min_{t\in [a_{1},b_{1}]}w(t)\geq {c_{1}}\|
w\| \}
\end{equation*}%
and
\begin{equation*}
K_{2}:=\{w\in C[0,1]:\ \min_{t\in [a_{2},b_{2}]}w(t)\geq {c_{2}}\|
w\| \},
\end{equation*}%
with $c_{i}=\min \{{c}_{k_i},{c}_{\gamma_i}\}$.
Note that the functions in $K_{1}$ are non-negative, while in $K_{2}$ are allowed to change sign outside the interval $[a_{2},b_{2}]$.
$K_{1}$ is a kind of cone firstly used by Krasnosel'ski\u\i{}, see~\cite{krzab}, and D.~Guo, see e.g.~\cite{guolak}, while $K_{2}$
has been introduced by Infante and Webb in \cite{gijwjiea}.

By a \emph{nontrivial} solution of the system \eqref{ellbvpsec} we mean a solution $(u,v)\in K$ of the system \eqref{syst} such that $\|(u,v)\|>0$.

We assume that
\begin{itemize}
\item  For $i=1,2$, $H_{i}: K\to [0,\infty)$ is a compact functional.
\end{itemize}

Under our assumptions, it is possible to show that the integral operator
\begin{gather*}
\begin{aligned}    
T(u,v)(t):=
\left(
\begin{array}{c}
\gamma_{1}(t)H_{1}[u,v]+F_1(u,v)(t) \\
\gamma_{2}(t)H_{2}[u,v]+F_2(u,v)(t)%
\end{array}
\right)
:=
\left(
\begin{array}{c}
T_1(u,v)(t) \\
T_2(u,v)(t)%
\end{array}
\right) ,
\end{aligned}
\end{gather*}
where
\begin{equation*}
F_i(u,v)(t):=\int_{0}^{1}k_i(t,s)g_i(s)f_i(u(s),v(s))\,ds,
\end{equation*}
leaves the cone $K$ invariant and is compact.
\begin{lem}
The operator $T$ maps $K$ into $K$ and is compact.
\end{lem}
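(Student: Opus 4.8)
The plan is to split the statement into the two assertions it contains---that $T(K)\subset K$ and that $T$ is a compact map---and to reduce both to the bounds on $k_1,k_2,\gamma_1,\gamma_2$ recorded above together with the standing hypotheses on $f_i$, $g_i$ and $H_i$. No new estimate is needed beyond these; the argument is a packaging of the Lemma on the Green's functions, the Lemma on the $\gamma_i$, the assumption $g_i\Phi_i\in L^1[0,1]$, and the compactness of the functionals $H_i$.

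First I would prove invariance. Fix $(u,v)\in K$. For the first component, since $1\le\beta_1<1/\eta$ we have $\gamma_1\ge0$ on $[0,1]$, and $H_1[u,v]\ge0$ together with $k_1\ge0$, $g_1\ge0$, $f_1\ge0$ gives $F_1(u,v)\ge0$, hence $T_1(u,v)\ge0$. For the cone inequality, for $t\in[a_1,b_1]$ I would use $\gamma_1(t)\ge c_{\gamma_1}\|\gamma_1\|$ and $k_1(t,s)\ge c_{k_1}\Phi_1(s)$, together with $\|\gamma_1H_1[u,v]\|=\|\gamma_1\|\,H_1[u,v]$ and $\|F_1(u,v)\|\le\int_0^1\Phi_1(s)g_1(s)f_1(u(s),v(s))\,ds$, to chain
\begin{align*}
T_1(u,v)(t)
&=\gamma_1(t)H_1[u,v]+\int_0^1k_1(t,s)g_1(s)f_1(u(s),v(s))\,ds\\
&\ge c_{\gamma_1}\|\gamma_1\|H_1[u,v]+c_{k_1}\int_0^1\Phi_1(s)g_1(s)f_1(u(s),v(s))\,ds\\
&\ge c_1\bigl(\|\gamma_1H_1[u,v]\|+\|F_1(u,v)\|\bigr)\ge c_1\|T_1(u,v)\|,
\end{align*}
where $c_1=\min\{c_{k_1},c_{\gamma_1}\}$, so $T_1(u,v)\in K_1$. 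For the second component I would argue identically, using $|k_2(t,s)|\le\Phi_2(s)$ on $[0,1]\times[0,1]$, $k_2(t,s)\ge c_{k_2}\Phi_2(s)$ on $[a_2,b_2]\times[0,1]$ and $\gamma_2(t)\ge c_{\gamma_2}\|\gamma_2\|$ on $[a_2,b_2]$; here no sign claim is made, consistently with the definition of $K_2$, and the possible negativity of $\gamma_2$ and $k_2$ outside $[a_2,b_2]$ is immaterial because only $\|F_2(u,v)\|$ and $\|\gamma_2H_2[u,v]\|$ enter the estimate. This yields $T_2(u,v)\in K_2$, hence $T(K)\subset K$.

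Next I would prove compactness via the Arzel\`a--Ascoli theorem. Let $\mathcal{B}\subset K$ be bounded, with $\|(u,v)\|\le\rho$ on $\mathcal{B}$. Continuity of $f_i$ on the compact square $[-\rho,\rho]^2$ provides $M_i$ with $f_i(u(s),v(s))\le M_i$, and compactness of $H_i$ provides $N_i$ with $H_i[u,v]\le N_i$; since $g_i\Phi_i\in L^1[0,1]$ this gives $\|T_i(u,v)\|\le\|\gamma_i\|N_i+M_i\int_0^1\Phi_i(s)g_i(s)\,ds<\infty$, so $T(\mathcal{B})$ is uniformly bounded. For equicontinuity I would use that $\gamma_i$ is uniformly continuous on $[0,1]$ and that $t\mapsto k_i(t,s)$ is continuous for each fixed $s$ with $|k_i(t,s)|\le\Phi_i(s)$, whence
\[
|F_i(u,v)(t)-F_i(u,v)(t')|\le M_i\int_0^1|k_i(t,s)-k_i(t',s)|\,g_i(s)\,ds\longrightarrow 0
\]
as $|t-t'|\to0$, uniformly over $\mathcal{B}$, by dominated convergence with dominating function $2M_i\Phi_i(s)g_i(s)\in L^1[0,1]$; the discontinuity of $k_2$ in the variable $s$ along $s=\xi$ plays no role here. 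Thus $T(\mathcal{B})$ is relatively compact in $C[0,1]\times C[0,1]$, and the continuity of $T$ follows from the continuity of $f_i$ and $H_i$ by the same dominated-convergence argument, so $T$ is compact.

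The hard part will be the equicontinuity of $F_2$: since $k_2$ is only continuous in $t$ and is discontinuous in $s$, the passage to the limit must be justified uniformly over the bounded set $\mathcal{B}$, and this is exactly what the domination by $2M_2\Phi_2g_2\in L^1[0,1]$ provides. By contrast, once the bounds from the preceding Lemma and the identity $c_i=\min\{c_{k_i},c_{\gamma_i}\}$ are in hand, the cone-invariance estimates are short and purely computational.
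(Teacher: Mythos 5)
Your proof is correct and follows essentially the same route as the paper: the cone-invariance step is the identical chain $\min_{t\in[a_i,b_i]}T_i(u,v)(t)\ge c_{\gamma_i}\|\gamma_i\|H_i[u,v]+c_{k_i}\int_0^1\Phi_i(s)g_i(s)f_i(u(s),v(s))\,ds\ge c_i\|T_i(u,v)\|$, using $|k_i(t,s)|\le\Phi_i(s)$ to bound $\|T_i(u,v)\|$ from above. The only (immaterial) difference is in the compactness step, where the paper splits $T_i$ into $F_i$ (compact by ``routine arguments'') plus the perturbation $\gamma_iH_i[u,v]$, whose range lies in the span of $\gamma_i$, whereas you run a single Arzel\`a--Ascoli argument for the sum --- which is precisely the routine argument the paper omits, and your domination by $2M_i\Phi_i g_i\in L^1[0,1]$ handles the discontinuity of $k_2$ in $s$ correctly since $t\mapsto k_2(t,s)$ is continuous for each fixed $s$.
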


\begin{proof}
Take $(u,v)\in K$ such that $||(u,v)||\leq r$. Then we have, for $t\in [0,1]$,
\begin{align*}
|T_{2}(u,v)(t)|=&\Bigl|\gamma _{2}(t)H _{2}[u,v]+\int_{0}^{1} k_2(t,s)g_{2}(s)f_{2}(u(s),v(s))\,ds\Bigr| \\
\leq& |\gamma _{2}(t)|H _{2}[u,v]+\int_{0}^{1} |k_2(t,s)|g_{2}(s)f_{2}(u(s),v(s))\,ds
\end{align*}
and, taking the supremum over $[0,1]$, we obtain
\begin{equation*}
||T_{2}(u,v)||\leq || \gamma _{2}|| H
_{2}[u,v]+\int_{0}^{1}\Phi
_{2}(s)g_{2}(s)f_{2}(u(s),v(s))\,ds.
\end{equation*}%
Moreover we have
\begin{eqnarray*}
\min_{t\in [a_{2},b_{2}]}T_{2}(u,v)(t) &\geq  {c}_{\gamma_2} || \gamma
_{2}|| H _{2}[u,v]+ {c}_{k_2} \int_{0}^{1}\Phi _{2}(s)g_{2}(s)f_{2}(u(s),v(s))\,ds \\
&\geq c_{2}||T_{2}(u,v)|| .
\end{eqnarray*}%
Hence we have $T_{2}(u,v)\in K_{2}$. In a similar manner we proceed for $T_{1}(u,v)$.

Moreover, the map $T$ is compact since the components $T_{i}$ are sum of two compact maps:  by routine arguments, the compactness of $F_{i}$ can be shown and, since $\gamma_{i}$  is continuous, the perturbation $\gamma_{i}(t)H _{i}[u,v]$ maps bounded sets into bounded subsets of a finite dimensional space.
\end{proof}
\subsection{Index calculations}
If $U$ is a open bounded subset of a cone $K$ (in the relative topology) we denote by $\overline{U}$ and $\partial U$ the closure and the boundary relative to $K$. When $U$ is an open bounded subset of $X$ we write $U_K=U \cap K$, an open subset of $K$.

We summarize in the next Lemma  some classical results regarding the fixed point index, for more details see~\cite{Amann-rev, guolak}.

\begin{lem}
Let $U$ be an open bounded set with $0\in U_{K}$ and $\overline{U}_{K}\ne K$. Assume that $S:\overline{U}_{K}\to K$ is a compact map such that $x\neq Sx$ for all $x\in \partial U_{K}$. Then the fixed point index $i_{K}(S, U_{K})$ has the following properties.
\begin{itemize}
\item[(1)] If there exists $e\in K\setminus \{0\}$ such that $x\neq Sx+\lambda e$ for all $x\in \partial U_K$ and all $\lambda>0$,
then $i_{K}(S, U_{K})=0$.
\item[(2)] If  $\mu x \neq Sx$ for all $x\in \partial U_K$ and for every $\mu \geq 1$, then $i_{K}(S, U_{K})=1$.
\item[(3)] If $i_K(S,U_K)\ne0$, then $S$ has a fixed point in $U_K$.
\item[(4)] Let $U^{1}$ be open in $X$ with $\overline{U_{K}^{1}}\subset U_K$. If $i_{K}(S, U_{K})=1$ and $i_{K}(S, U_{K}^{1})=0$, then $S$ has a fixed point in $U_{K}\setminus \overline{U_{K}^{1}}$. The same result holds if $i_{K}(S, U_{K})=0$ and $i_{K}(S, U_{K}^{1})=1$.
\end{itemize}
\end{lem}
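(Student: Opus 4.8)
The plan is to treat this Lemma as a compendium of classical facts and to derive each item from the four fundamental properties of the fixed point index for compact maps on a cone --- normalization, additivity (excision), homotopy invariance, and the solution (existence) property --- as developed in Amann's survey~\cite{Amann-rev} and in Guo--Lakshmikantham~\cite{guolak}. Recall that $i_K(S,U_K)$ is defined, via a retraction of $X$ onto $K$, from the Leray--Schauder degree of $I-S$ on $U_K$; all that is needed below is that it enjoys those four properties and that it is well defined precisely because $x\neq Sx$ on $\partial U_K$. So the whole argument amounts to choosing the right admissible homotopies and invoking the axioms.

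For item~(2) I would use the straight-line homotopy $h(\lambda,x):=\lambda S x$, $\lambda\in[0,1]$, $x\in\overline{U}_K$, which is admissible: a fixed point $x=\lambda Sx$ on $\partial U_K$ with $\lambda=0$ would force $x=0\in U_K$, contradicting $x\in\partial U_K$; and with $\lambda\in(0,1]$ it would give $Sx=\mu x$ with $\mu=1/\lambda\geq 1$, against the hypothesis. Hence homotopy invariance yields $i_K(S,U_K)=i_K(0,U_K)$, and since $0\in U_K$ the normalization property gives $i_K(0,U_K)=1$. For item~(1), fix $e\in K\setminus\{0\}$ and consider $h(\lambda,x):=Sx+\lambda e$ for $\lambda\in[0,\sigma_0]$. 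Since $\overline{U}_K$ is bounded and $S$ is compact, $M:=\sup\{\|x\|+\|Sx\|:x\in\overline{U}_K\}<\infty$, so for $\sigma_0>M/\|e\|$ the equation $x=Sx+\sigma_0 e$ has no solution in $\overline{U}_K$, whence $i_K(h(\sigma_0,\cdot),U_K)=0$; the hypothesis rules out fixed points of $h(\lambda,\cdot)$ on $\partial U_K$ for every $\lambda\in(0,\sigma_0]$, and $x\neq Sx$ on $\partial U_K$ covers $\lambda=0$, so the homotopy is admissible and $i_K(S,U_K)=i_K(h(\sigma_0,\cdot),U_K)=0$.

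Finally, item~(3) is exactly the solution property: a nonzero index forces a zero of $I-S$ in $U_K$. Item~(4) follows by excision/additivity: since $\overline{U_{K}^{1}}\subset U_K$ and $S$ has no fixed point on $\partial U_K^1\cup\partial U_K$, additivity gives $i_K(S,U_K\setminus\overline{U_{K}^{1}})=i_K(S,U_K)-i_K(S,U_K^1)$, which equals $1$ or $-1$ in the two stated cases, hence is nonzero, and (3) produces a fixed point in $U_K\setminus\overline{U_{K}^{1}}$. The only point requiring care throughout is verifying that the chosen homotopies stay fixed-point-free on the relevant boundaries; but this is precisely what the stated hypotheses are designed to guarantee, so no genuine obstacle arises and the argument is essentially bookkeeping on top of the axioms of the index.
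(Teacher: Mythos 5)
Your proof is correct: the paper states this Lemma without proof, deferring to the cited references (Amann's survey and Guo--Lakshmikantham), and your derivation of the four items from normalization, homotopy invariance, additivity and the solution property is exactly the standard argument found there. The two homotopies you choose (the radial one $\lambda Sx$ for item~(2) and $Sx+\lambda e$ with $\lambda$ pushed beyond $M/\|e\|$ for item~(1)) and the admissibility checks are the classical ones, so there is nothing to add.
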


For our index calculations we use the following (relative) open bounded sets in $K$:
\begin{equation*}
K_{\rho_1,\rho_2} = \{ (u,v) \in K : \|u\|< \rho_1\ \text{and}\ \|v\|< \rho_2\}
\end{equation*}
and
\begin{equation*}
V_{\rho_1,\rho_2} =\{(u,v) \in K: \min_{t\in [a_1,b_1]}u(t)<\rho_1\ \text{and}\ \min_{t\in
[a_2,b_2]}v(t)<\rho_2\}.
\end{equation*}
 The set $V_{\rho,\rho}$  was introduced in~\cite{gipp-ns} and is equal to the set called $\Omega^{\rho /c}$ in~\cite{df-gi-do}, an extension to the case of systems of a set given by Lan \cite{lan}. The choice of different radii (used also in the papers~\cite{chzh2, genupa, gipp-nodea}) allows more freedom in the growth of the nonlinearities.

 We utilize  the following Lemma, similar to Lemma $5$ of \cite{df-gi-do}. The proof follows as the corresponding one in~\cite{df-gi-do} and is omitted.

\begin{lem}  
The sets $K_{\rho_1,\rho_2}$ and $V_{\rho_1,\rho_2}$ have the following properties:
\begin{enumerate}
\item $K_{\rho_1,\rho_2}\subset V_{\rho_1,\rho_2}\subset K_{\rho_1/c_1,\rho_2/c_2}$.
\item $(w_1,w_2) \in \partial V_{\rho_1,\rho_2}$ \; iff \; $(w_1,w_2)\in K$, $\displaystyle\min_{t\in [a_i,b_i]} w_i(t)= \rho_i$ for some $i\in \{1,2\}$ and $\displaystyle\min_{t\in [a_j,b_j]}w_j(t)\le \rho_j$ for $j\neq i$.
\item If $(w_1,w_2) \in \partial V_{\rho_1,\rho_2}$, then for some $i\in\{1,2\}$ $\rho_i \le w_i(t) \le \rho_i/c_i$ for $t \in [a_i,b_i]$ and for $j\neq i$ $0 \leq w_j(t) \leq \rho_j/c_j$ for  $t\in [a_j,b_j]$.
 \item $(w_1,w_2) \in \partial K_{\rho_1,\rho_2}$ \; iff \; $(w_1,w_2)\in K$, $\displaystyle\|w_i\|= \rho_i$ for some $i\in \{1,2\}$ and
 $\displaystyle\|w_j\| \le \rho_j$ for $j\neq i$.
\end{enumerate}
\end{lem}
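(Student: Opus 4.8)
The plan is to derive all four statements directly from the definitions of the cones $K_1,K_2$ and of the relative sets $K_{\rho_1,\rho_2}$, $V_{\rho_1,\rho_2}$, using repeatedly the two-sided bound valid for $w_i\in K_i$, namely
\[
c_i\|w_i\|\;\le\;\min_{t\in[a_i,b_i]} w_i(t)\;\le\;\|w_i\|,
\]
where the right-hand inequality holds because $\min_{[a_i,b_i]}w_i\le\max_{[a_i,b_i]}w_i\le\|w_i\|$, and where for $i=2$ the middle quantity is moreover nonnegative since $c_2>0$. The only structural facts needed beyond this are that $K_{\rho_1,\rho_2}$ and $V_{\rho_1,\rho_2}$ are open in the relative topology of $K$ — each is the intersection of $K$ with a subset of $X=C[0,1]\times C[0,1]$ which is open because it is cut out by the (Lipschitz, hence continuous) functionals $(u,v)\mapsto\|u\|,\|v\|$ and $(u,v)\mapsto\min_{[a_1,b_1]}u,\min_{[a_2,b_2]}v$ — together with an explicit identification of their closures and boundaries relative to $K$.

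For statement (1): if $(u,v)\in K_{\rho_1,\rho_2}$ then $\min_{[a_1,b_1]}u\le\|u\|<\rho_1$ and $\min_{[a_2,b_2]}v\le\|v\|<\rho_2$, so $(u,v)\in V_{\rho_1,\rho_2}$; conversely, if $(u,v)\in V_{\rho_1,\rho_2}$ then $c_1\|u\|\le\min_{[a_1,b_1]}u<\rho_1$ and $c_2\|v\|\le\min_{[a_2,b_2]}v<\rho_2$, whence $\|u\|<\rho_1/c_1$, $\|v\|<\rho_2/c_2$, i.e. $(u,v)\in K_{\rho_1/c_1,\rho_2/c_2}$.

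For statements (2) and (4): I would first show that, relative to $K$, the closure of $V_{\rho_1,\rho_2}$ is $\{(u,v)\in K:\ \min_{[a_1,b_1]}u\le\rho_1,\ \min_{[a_2,b_2]}v\le\rho_2\}$ and the closure of $K_{\rho_1,\rho_2}$ is $\{(u,v)\in K:\ \|u\|\le\rho_1,\ \|v\|\le\rho_2\}$. The inclusions ``$\subseteq$'' are immediate from continuity of the defining functionals. For ``$\supseteq$'' I use a scaling argument: given $(u,v)$ in the candidate closure and $\varepsilon\in(0,1)$, the pair $\big((1-\varepsilon)u,(1-\varepsilon)v\big)$ still lies in $K$ (a cone is invariant under multiplication by nonnegative scalars) and satisfies the defining strict inequalities — e.g. $(1-\varepsilon)\min_{[a_1,b_1]}u<\rho_1$ because either $\min_{[a_1,b_1]}u>0$, so the factor $(1-\varepsilon)$ makes it strictly smaller, or $\min_{[a_1,b_1]}u=0<\rho_1$ already — and $\big((1-\varepsilon)u,(1-\varepsilon)v\big)\to(u,v)$ as $\varepsilon\to0$. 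Taking the difference of closure and interior then gives exactly the stated iff-descriptions of $\partial V_{\rho_1,\rho_2}$ and $\partial K_{\rho_1,\rho_2}$. Statement (3) then follows from (2): if $(w_1,w_2)\in\partial V_{\rho_1,\rho_2}$ and $i$ is an index with $\min_{[a_i,b_i]}w_i=\rho_i$, then $w_i\in K_i$ gives $\|w_i\|\le\rho_i/c_i$, so $\rho_i\le w_i(t)\le\|w_i\|\le\rho_i/c_i$ on $[a_i,b_i]$; and for $j\neq i$, $w_j\in K_j$ gives $0\le w_j(t)\le\|w_j\|\le(1/c_j)\min_{[a_j,b_j]}w_j\le\rho_j/c_j$ on $[a_j,b_j]$.

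The only point requiring a little care — and essentially the sole obstacle — is the scaling step in the closure identification in the degenerate case where the relevant minimum (or norm) vanishes; this is harmless here precisely because the radii $\rho_i$ are positive, so a vanishing quantity is automatically $<\rho_i$. Everything else is a direct transcription of the cone inequalities, exactly along the lines of the analogous Lemma~5 of~\cite{df-gi-do}.
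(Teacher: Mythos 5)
Your proof is correct and follows the standard argument for this lemma (the paper itself omits the proof, deferring to the analogous Lemma~5 of the cited work of Franco, Infante and O'Regan, whose proof proceeds in essentially the same way): the cone inequalities $c_i\|w_i\|\le\min_{t\in[a_i,b_i]}w_i(t)\le\|w_i\|$ give (1) and (3), and the scaling argument correctly identifies the relative closures, hence the boundaries in (2) and (4). Your attention to the degenerate case where the minimum or norm vanishes, and to the nonnegativity of $\min_{[a_2,b_2]}v$ for $v\in K_2$, covers the only delicate points.
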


Now we prove a result concerning the fixed point index on the set $K_{\rho_1,\rho_2}$.

\begin{lem}\label{ind1L}
Assume that
\begin{enumerate}
\item[$(\mathrm{I}_{\rho_1,\rho_2 }^{1})$]  there exist $\rho_1,\rho_2 >0$, $A^{\rho_1,\rho_2}_{1},A^{\rho_1,\rho_2}_{2}\geq 0$, linear functionals $\alpha^{\rho_1,\rho_2}_{ij}[\cdot]:K_{j}\rightarrow [0,+\infty)$, involving positive Stieltjes measures given by
$$
\alpha^{\rho_1,\rho_2}_{ij}[w]=\int_0^1 w(t)\,dC_{ij}(t),
$$
where $C_{ij}$ is of bounded variation, such that, for $i=1,2$,
\begin{itemize}
\item  $\alpha^{\rho_1,\rho_2}_{ii}[\gamma_{i}]<1$,
\item $H_{i}[u,v]\leq A_i^{\rho_1,\rho_2}+\alpha^{\rho_1,\rho_2}_{i1}[u]+\alpha^{\rho_1,\rho_2}_{i2}[v]$ for  $(u,v)\in \partial
K_{\rho_1,\rho_2}\,$,
\item the following inequality holds  with $j=1,2$, $j\not=i$:
\begin{equation}\label{ind1s}
 f_i^{\rho_1,\rho_2} \Bigl(\frac{ || \gamma _{i}||}{1 -\alpha_{ii}^{\rho_1,\rho_2}[\gamma_i]}\int_{0}^{1}\mathcal K_{i}(s)g_{i}(s)\,ds +\dfrac{1}{m_i}\Bigr)+||\gamma_i|| \dfrac{A_i^{\rho_1,\rho_2}+\rho_j\alpha_{ij}^{\rho_1,\rho_2}[1]}{\rho_i(1-\alpha_{ii}^{\rho_1,\rho_2}[\gamma_i])}<1,
\end{equation}{}
where
\begin{align*}
f_{i}^{\rho_1,\rho_2}:=&\sup \Bigl\{\frac{f_{i}(u,v)}{\rho_i}:\;(u,v)\in [ 0,\rho_1 ]\times [ -\rho_2,\rho_2 ]\Bigr\},\\
\mathcal K_{i}(s):=&\int_{0}^{1}k_i(t,s)\,dC_{ii}(t)\ \text{ and }\ \frac{1}{m_{i}}:=\sup_{t\in \lbrack 0,1]}\int_{0}^{1}|k_{i}(t,s)|g_{i}(s)\,ds.
\end{align*}
\end{itemize}
\end{enumerate}
Then $i_{K}(T,K_{\rho_1,\rho_2})$ is equal to 1.
\end{lem}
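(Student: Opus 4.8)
The plan is to invoke property~(2) of the fixed point index recalled above: I would show that $\mu(u,v)\neq T(u,v)$ for every $(u,v)\in\partial K_{\rho_1,\rho_2}$ and every $\mu\geq 1$, which immediately yields $i_{K}(T,K_{\rho_1,\rho_2})=1$. So assume, for contradiction, that $\mu(u,v)=T(u,v)$ for some $(u,v)\in\partial K_{\rho_1,\rho_2}$ and some $\mu\geq 1$. First I would use the description of $\partial K_{\rho_1,\rho_2}$ in part~(4) of the earlier Lemma: writing $(w_1,w_2):=(u,v)$, there is an index $i\in\{1,2\}$ with $\|w_i\|=\rho_i$ and $\|w_j\|\leq\rho_j$ for $j\neq i$. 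Since $(u,v)\in K$ we have $0\leq u(s)\leq\rho_1$ and $|v(s)|\leq\rho_2$ for all $s\in[0,1]$, hence $(u(s),v(s))\in[0,\rho_1]\times[-\rho_2,\rho_2]$ and so $f_i(u(s),v(s))\leq\rho_i f_i^{\rho_1,\rho_2}$ for every $s\in[0,1]$.

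Working with the $i$-th equation $\mu w_i(t)=\gamma_i(t)H_i[u,v]+F_i(u,v)(t)$, I would apply the positive linear functional $\alpha_{ii}^{\rho_1,\rho_2}$ to both sides; Fubini's theorem (valid since the Stieltjes measures are positive) gives $\alpha_{ii}^{\rho_1,\rho_2}[F_i(u,v)]=\int_0^1\mathcal K_i(s)g_i(s)f_i(u(s),v(s))\,ds\leq\rho_i f_i^{\rho_1,\rho_2}\int_0^1\mathcal K_i(s)g_i(s)\,ds$. Together with $H_i[u,v]\leq A_i^{\rho_1,\rho_2}+\alpha_{ii}^{\rho_1,\rho_2}[w_i]+\rho_j\alpha_{ij}^{\rho_1,\rho_2}[1]$ — using $\alpha_{ij}^{\rho_1,\rho_2}[w_j]\leq\rho_j\alpha_{ij}^{\rho_1,\rho_2}[1]$ since $w_j(s)\leq\rho_j$ and the measure is positive — and with $\mu\geq 1$ and $\alpha_{ii}^{\rho_1,\rho_2}[\gamma_i]<1$, one solves the resulting affine inequality for $\alpha_{ii}^{\rho_1,\rho_2}[w_i]$ and feeds it back to get
$$H_i[u,v]\leq\frac{A_i^{\rho_1,\rho_2}+\rho_j\alpha_{ij}^{\rho_1,\rho_2}[1]+\rho_i f_i^{\rho_1,\rho_2}\int_0^1\mathcal K_i(s)g_i(s)\,ds}{1-\alpha_{ii}^{\rho_1,\rho_2}[\gamma_i]}.$$

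Finally, taking the supremum over $t\in[0,1]$ in the $i$-th equation and using $|\gamma_i(t)|\leq\|\gamma_i\|$ together with $|F_i(u,v)(t)|\leq\int_0^1|k_i(t,s)|g_i(s)f_i(u(s),v(s))\,ds\leq\rho_i f_i^{\rho_1,\rho_2}/m_i$, I get $\mu\rho_i\leq\|\gamma_i\|H_i[u,v]+\rho_i f_i^{\rho_1,\rho_2}/m_i$. Inserting the bound on $H_i[u,v]$, using $\mu\geq 1$ and dividing by $\rho_i$ produces
$$1\leq f_i^{\rho_1,\rho_2}\Bigl(\frac{\|\gamma_i\|}{1-\alpha_{ii}^{\rho_1,\rho_2}[\gamma_i]}\int_0^1\mathcal K_i(s)g_i(s)\,ds+\frac1{m_i}\Bigr)+\|\gamma_i\|\frac{A_i^{\rho_1,\rho_2}+\rho_j\alpha_{ij}^{\rho_1,\rho_2}[1]}{\rho_i(1-\alpha_{ii}^{\rho_1,\rho_2}[\gamma_i])},$$
contradicting~\eqref{ind1s}; hence $i_{K}(T,K_{\rho_1,\rho_2})=1$ by property~(2). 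The step I expect to be the main obstacle is the sign bookkeeping in the second component, where $\gamma_2$, $k_2$ and $\mathcal K_2$ need not be sign-definite: one must consistently pass to absolute values, lean on $|k_2(t,s)|\leq\Phi_2(s)$ and the non-negativity of $f_2$, $g_2$ and $H_2$, and check that the affine ``feedback'' inequality for $\alpha_{22}^{\rho_1,\rho_2}[w_2]$ still closes because $\alpha_{22}^{\rho_1,\rho_2}[\gamma_2]<1$.
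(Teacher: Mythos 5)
Your proposal is correct and follows essentially the same route as the paper: contradiction via property (2) of the index lemma, applying the functional $\alpha_{ii}^{\rho_1,\rho_2}$ to the $i$-th equation, solving the resulting affine inequality using $\alpha_{ii}^{\rho_1,\rho_2}[\gamma_i]<1$ and $\mu\ge 1$, feeding the bound on $H_i[u,v]$ back in, and taking the supremum to contradict \eqref{ind1s}. The only cosmetic difference is that you treat a generic index $i$, whereas the paper writes out the case $\|v\|=\rho_2$ in detail and omits the (simpler) case $\|u\|=\rho_1$; the sign issues you flag for the second component are handled exactly as you anticipate, via $|k_2(t,s)|\le\Phi_2(s)$ and absolute values.
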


\begin{proof}
We show that $\mu (u,v)\neq T(u,v)$ for every $(u,v)\in \partial K_{\rho_1,\rho_2 }$ and for every $\mu \geq 1$.
In fact, if this does not happen, there exist $\mu \geq 1$ and $(u,v)\in\partial K_{\rho_1,\rho_2 }$ such that $\mu (u,v)=T(u,v)$. Firstly we assume that $\| u\| \leq\rho_1 $ and $\| v\| = \rho_2 $. Then we have, for $t\in[0,1]$,
\begin{equation}\label{29}
\mu v(t)=\gamma_2(t)H_2 [u,v]+F_{2}(u,v)(t).
\end{equation}
Applying $\alpha_{22}^{\rho_1,\rho_2}$ to both sides of \eqref{29}, we obtain
\begin{align*}
\mu \alpha_{22}^{\rho_1,\rho_2}[v]=&\alpha_{22}^{\rho_1,\rho_2}[\gamma_2]H_2[u,v]+\alpha_{22}^{\rho_1,\rho_2}[F_2(u,v)]\\
\le &\alpha_{22}^{\rho_1,\rho_2}[\gamma_2](A_2^{\rho_1,\rho_2}+\alpha_{21}^{\rho_1,\rho_2}[u]+\alpha_{22}^{\rho_1,\rho_2}[v])+\alpha_{22}^{\rho_1,\rho_2}[F_2(u,v)]\\
\le &\alpha_{22}^{\rho_1,\rho_2}[\gamma_2](A_2^{\rho_1,\rho_2}+\rho_1\alpha_{21}^{\rho_1,\rho_2}[1]+\alpha_{22}^{\rho_1,\rho_2}[v])+\int_{0}^{1}\mathcal K_{2}(s)g_{2}(s)f_2(u(s),v(s))\,ds.
\end{align*}
Therefore we obtain
\begin{align*}
(\mu-\alpha_{22}^{\rho_1,\rho_2}[\gamma_2])\alpha_{22}^{\rho_1,\rho_2}[v]
\le &\rho_2\alpha_{22}^{\rho_1,\rho_2}[\gamma_2]\Bigl(\frac{A_2^{\rho_1,\rho_2}}{\rho_2}
+\frac{\rho_1}{\rho_2}\alpha_{21}^{\rho_1,\rho_2}[1]\Bigr)\\
&+\rho_2\int_{0}^{1}\mathcal K_{2}(s)g_{2}(s)\frac{f_2(u(s),v(s))}{\rho_2}\,ds,
\end{align*}
and, consequently, we get
\begin{align*}
\alpha_{22}^{\rho_1,\rho_2}[v]
 &\le \frac{\rho_2\,\alpha_{22}^{\rho_1,\rho_2}[\gamma_2]}{\mu-\alpha_{22}^{\rho_1,\rho_2}[\gamma_2]}\Bigl(\frac{A_2^{\rho_1,\rho_2}}{\rho_2}
+\frac{\rho_1}{\rho_2}\alpha_{21}^{\rho_1,\rho_2}[1]\Bigr)\\
&\,\,\,\,\,\,\,\,+\frac{\rho_2}{\mu-\alpha_{22}^{\rho_1,\rho_2}[\gamma_2]}\int_{0}^{1}\mathcal K_{2}(s)g_{2}(s)\frac{f_2(u(s),v(s))}{\rho_2}\,ds\\
&\le \frac{\rho_2\,\alpha_{22}^{\rho_1,\rho_2}[\gamma_2]}{1-\alpha_{22}^{\rho_1,\rho_2}[\gamma_2]}\Bigl(\frac{A_2^{\rho_1,\rho_2}+\rho_1\alpha_{21}^{\rho_1,\rho_2}[1]}{\rho_2}\Bigr)\\
&\,\,\,\,\,\,\,\,+\frac{\rho_2}{1-\alpha_{22}^{\rho_1,\rho_2}[\gamma_2]}\int_{0}^{1}\mathcal K_{2}(s)g_{2}(s)\frac{f_2(u(s),v(s))}{\rho_2}\,ds.
\end{align*}
Moreover, we have
\begin{align*}
\mu |v(t)|&=|\gamma_2(t)H_{2} [u,v]+F_{2}(u,v)(t)|\\ & \leq |\gamma_{2}(t)|H_{2} [u,v]+\int_{0}^{1}|k_{2}(t,s)|g_{2}(s)f_2(u(s),v(s))\,ds\\
 & \leq |\gamma_{2}(t)|(A_2^{\rho_1,\rho_2}+\alpha_{21}^{\rho_1,\rho_2}[u]+\alpha_{22}^{\rho_1,\rho_2}[v])
+\int_{0}^{1}|k_{2}(t,s)|g_{2}(s)f_2(u(s),v(s))\,ds\\
& \leq \rho_2\Bigl(|\gamma_{2}(t)|\dfrac{A_2^{\rho_1,\rho_2}+\rho_1\alpha_{21}^{\rho_1,\rho_2}[1]}{\rho_2}+|\gamma_{2}(t)|\bigg(
\frac{\alpha_{22}^{\rho_1,\rho_2}[\gamma_2]}{1-\alpha_{22}^{\rho_1,\rho_2}[\gamma_2]}\Bigl(\dfrac{A_2^{\rho_1,\rho_2}+\rho_1\alpha_{21}^{\rho_1,\rho_2}[1]}{\rho_2}\Bigr)\\
&\,\,\,\,\,+\frac{1}{1-\alpha_{22}^{\rho_1,\rho_2}[\gamma_2]}\int_{0}^{1}\mathcal K_{2}(s)g_{2}(s)\frac{f_2(u(s),v(s))}{\rho_2}\,ds\bigg)
+\int_{0}^{1}|k_{2}(t,s)|g_{2}(s)\frac{f_2(u(s),v(s))}{\rho_2}ds\Bigr)\\
&=\rho_2\Bigl(|\gamma_{2}(t)|\dfrac{A_2^{\rho_1,\rho_2}+\rho_1\alpha_{21}^{\rho_1,\rho_2}[1]}{\rho_2(1-\alpha_{22}^{\rho_1,\rho_2}[\gamma_2])}+\frac{|\gamma_2(t)|}{1 -\alpha_{22}^{\rho_1,\rho_2}[\gamma_2]}\int_{0}^{1}\mathcal K_{2}(s)g_{2}(s)\frac{f_2(u(s),v(s))}{\rho_2}\,ds\\
&\,\,\,\,\,\,\,\,\,\,\,\,\,\,\,\,\,\,+\int_{0}^{1}|k_{2}(t,s)|g_{2}(s)\frac{f_2(u(s),v(s))}{\rho_2}ds\Bigr).
\end{align*}
Taking the supremum over $[0,1]$ gives
\begin{align*}
\mu {\rho_2} \leq \rho_2\Bigl(&||\gamma_2||\dfrac{A_2^{\rho_1,\rho_2}+\rho_1\alpha_{21}^{\rho_1,\rho_2}[1]}{\rho_2(1-\alpha_{22}^{\rho_1,\rho_2}[\gamma_2])}+\frac{||\gamma_2||}{1 -\alpha_{22}^{\rho_1,\rho_2}[\gamma_2]}f_2^{\rho_1,\rho_2}\int_{0}^{1}\mathcal K_{2}(s)g_{2}(s)\,ds\\
&+f_{2}^{\rho_1,\rho_2}\sup_{t\in [0,1]}\int_{0}^{1}|k_{2}(t,s)|g_{2}(s)ds\Bigr).
\end{align*}

Using the hypothesis \eqref{ind1s} we obtain $\mu \rho_2 <\rho_2$. This contradicts the fact that $\mu \geq 1$ and proves the result.

The case $\| u\| =\rho_1 $ and $\| v\| \leq \rho_2 $ is simpler and therefore is omitted.
\end{proof}

\begin{rem}
Take $\omega\in L^1([0,1]\times [0,1])$ and denote by
$$
\omega^+(t,s)=\max\{\omega(t,s),0\},\,\,\, \omega^-(t,s)=\max\{-\omega(t,s),0\}.
$$
Then we have
\begin{equation}\label{k+k-}
\left|\int_0^1 \omega(t,s)ds \right|\le\max\left\{\int_0^1\omega^+(t,s)ds,\int_0^1 \omega^-(t,s)ds \right\}\le \int_0^1|\omega(t,s)|ds,
\end{equation}
since $\omega=\omega^+ - \omega^-$ and  $|\omega|=\omega^++\omega^-$.

Note that, using the inequality~\eqref{k+k-} as in \cite{gipp-nodea, gi-pp-ft}, it is possible to relax the growth assumptions on the nonlinearity $f_2$ in Lemma~\ref{ind1L}, by replacing the quantity
$$
\sup_{t\in [0,1]} \int_0^1|k_2(t,s)| g_2(s)ds
$$
with
\begin{equation*}
\sup_{t \in [0,1]  }\left\{\max\left\{\int_0^1k_2^+(t,s)g_2(s)\,ds,\int_0^1k_2^-(t,s)g_2(s)\,ds\right\}\right\}.
\end{equation*}{}

For example, if we fix $\beta_2=1/2$, $\xi=1/3$ in \eqref{k2} and $g_2\equiv 1$, we obtain
\begin{equation*}
\sup_{t \in [0,1]  }\left\{\max\left\{\int_0^1k_2^+(t,s)\,ds,\int_0^1k_2^-(t,s)\,ds\right\}\right\}=\sup_{t \in [1/2,1] }{\frac{1}{18}(-9t^2+14t-1)}=0.247
\end{equation*}{}
and
$$
\sup_{t\in [0,1]} \int_0^1|k_2(t,s)| ds=\sup_{t \in [1/2,1] }{\frac{1}{18}(-9t^2+16t-2)}=0.284.
$$
\end{rem}

We give a first Lemma that shows that the index is 0 on a set $V_{\rho_1,\rho_2}$.

\begin{lem}\label{idx0n1}
Assume that
\begin{enumerate}
\item[$(\mathrm{I}_{\rho_1,\rho_2 }^{0})$]  there exist $\rho_1,\rho_2 >0$, $A^{\rho_1,\rho_2}_{1},A^{\rho_1,\rho_2}_{2}\geq 0$, linear functionals $\alpha^{\rho_1,\rho_2}_{ij}[\cdot]:K_{j}\rightarrow [0,+\infty)$, involving positive Stieltjes measures given by
$$
\alpha^{\rho_1,\rho_2}_{ij}[w]=\int_0^1 w(t)\,dC_{ij}(t),
$$
where $C_{ij}$ is of bounded variation, such that, for $i=1,2$,
\begin{itemize}
\item  $\alpha^{\rho_1,\rho_2}_{ii}[\gamma_{i}]<1$,
\item $H_{i}[u,v]\geq A_i^{\rho_1,\rho_2}+\alpha^{\rho_1,\rho_2}_{i1}[u]+\alpha^{\rho_1,\rho_2}_{i2}[v]$ for $(u,v)\in \partial
V_{\rho_1,\rho_2}\,$,
\item the following inequality holds:
\begin{equation}\label{indos}
 f_{1,(\rho_1,\rho_2)} \Bigl(\frac{ c_{\gamma_i}|| \gamma _{i}||}{1 -\alpha_{ii}^{\rho_1,\rho_2}[\gamma_i]}\int_{a_i}^{b_i}\mathcal K_{i}(s)g_{i}(s)\,ds +\dfrac{1}{M_i}\Bigr)+ \dfrac{c_{\gamma_i}|| \gamma _{i}||A_i^{\rho_1,\rho_2}}{\rho_i(1-\alpha_{ii}^{\rho_1,\rho_2}[\gamma_i])}>1,
\end{equation}{}
where
\begin{align*}
f_{1,(\rho_1,\rho_2)}=&\inf \Bigl\{ \frac{f_1(u,v)}{ \rho_1}:\; (u,v)\in [\rho_1,\rho_1/c_1]\times[-\rho_2/c_2, \rho_2/c_2]\Bigr\},\\
f_{2,(\rho_1\rho_2)}=&\inf \Bigl\{ \frac{f_2(u,v)}{ \rho_2}:\; (u,v)\in[0,\rho_1/c_1]\times[\rho_2, \rho_2/c_2]\Bigr\},\\
 \frac{1}{M_i}:=&\inf_{t\in [a_i,b_i]}\int_{a_i}^{b_i} k_i(t,s) g_i(s)\,ds.
\end{align*}
\end{itemize}
\end{enumerate}
Then $i_{K}(T,V_{\rho_1,\rho_2})=0$.
\end{lem}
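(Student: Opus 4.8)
The plan is to apply property (1) of the fixed point index recalled above, with the element $e=(e_1,e_2)\in K$ given by $e_1\equiv e_2\equiv 1$; this lies in $K_1\times K_2$ since $c_1,c_2\le 1$, and $e\neq 0$. Hence it suffices to prove that $(u,v)\neq T(u,v)+\lambda e$ for all $(u,v)\in\partial V_{\rho_1,\rho_2}$ and all $\lambda>0$. Assume, for contradiction, that $(u,v)=T(u,v)+\lambda e$ for some such pair and some $\lambda>0$. By the characterization of $\partial V_{\rho_1,\rho_2}$, either $\min_{[a_1,b_1]}u=\rho_1$ (and $\min_{[a_2,b_2]}v\le\rho_2$) or $\min_{[a_2,b_2]}v=\rho_2$ (and $\min_{[a_1,b_1]}u\le\rho_1$). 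The two cases are symmetric under interchange of the components and of the index $i$, so I describe only the first.

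In that case the first component equation reads $u(t)=\gamma_1(t)H_1[u,v]+F_1(u,v)(t)+\lambda$. Since $(u,v)\in K$ and $\min_{[a_1,b_1]}u=\rho_1$, the properties of $\partial V_{\rho_1,\rho_2}$ give $u(s)\in[\rho_1,\rho_1/c_1]$ for $s\in[a_1,b_1]$, while $u(s)\in[0,\rho_1/c_1]$ and $v(s)\in[-\rho_2/c_2,\rho_2/c_2]$ for every $s\in[0,1]$; hence $f_1(u(s),v(s))\ge\rho_1 f_{1,(\rho_1,\rho_2)}$ on $[a_1,b_1]$. Discarding the non-negative term $\alpha^{\rho_1,\rho_2}_{12}[v]$ yields $H_1[u,v]\ge A^{\rho_1,\rho_2}_1+\alpha^{\rho_1,\rho_2}_{11}[u]$.

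Next I apply the functional $\alpha^{\rho_1,\rho_2}_{11}$ to the component equation. Using that $\alpha^{\rho_1,\rho_2}_{11}[F_1(u,v)]=\int_0^1\mathcal K_1(s)g_1(s)f_1(u(s),v(s))\,ds$ by Fubini, the non-negativity of $k_1$, of $\mathcal K_1$ and of $\alpha^{\rho_1,\rho_2}_{11}[1]$, the lower bound on $f_1$ over $[a_1,b_1]$, the bound just obtained for $H_1$, and $\alpha^{\rho_1,\rho_2}_{11}[\gamma_1]<1$, one solves for $\alpha^{\rho_1,\rho_2}_{11}[u]$ and reinserts it into the bound for $H_1$ to get
\[
H_1[u,v]\ \ge\ \frac{A^{\rho_1,\rho_2}_1+\rho_1 f_{1,(\rho_1,\rho_2)}\int_{a_1}^{b_1}\mathcal K_1(s)g_1(s)\,ds}{1-\alpha^{\rho_1,\rho_2}_{11}[\gamma_1]}.
\]
This ``bootstrap'' on $\alpha^{\rho_1,\rho_2}_{11}[u]$ — forced by the fact that $H_1$ is controlled only by an \emph{affine}, not a constant, functional of $(u,v)$ — is the step I expect to be the main obstacle; the rest is bookkeeping. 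Finally, evaluating the component equation on $[a_1,b_1]$, using $\gamma_1(t)\ge c_{\gamma_1}\|\gamma_1\|$ there (and $H_1\ge 0$), the definition of $1/M_1$, and $\lambda\ge 0$, then taking the minimum over $[a_1,b_1]$, I obtain
\[
\rho_1=\min_{t\in[a_1,b_1]}u(t)\ \ge\ \rho_1\Bigl[f_{1,(\rho_1,\rho_2)}\Bigl(\tfrac{c_{\gamma_1}\|\gamma_1\|}{1-\alpha^{\rho_1,\rho_2}_{11}[\gamma_1]}\int_{a_1}^{b_1}\mathcal K_1(s)g_1(s)\,ds+\tfrac1{M_1}\Bigr)+\tfrac{c_{\gamma_1}\|\gamma_1\|A^{\rho_1,\rho_2}_1}{\rho_1(1-\alpha^{\rho_1,\rho_2}_{11}[\gamma_1])}\Bigr],
\]
and the bracket exceeds $1$ by hypothesis \eqref{indos} (with $i=1$). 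This gives $\rho_1>\rho_1$, a contradiction, and the symmetric argument with $i=2$ disposes of the other case. Therefore property (1) of the fixed point index applies and $i_K(T,V_{\rho_1,\rho_2})=0$. The only additional care needed is to check that replacing $\int_0^1$ by $\int_{a_i}^{b_i}$ never increases the integrals — which holds because $k_i$ and $\mathcal K_i$ are non-negative on $[a_i,b_i]\times[0,1]$ — and that the sign change of $v$ is harmless, since in each case the relevant $f_i$-estimate is taken over $[a_i,b_i]$, where the corresponding component is bounded below by $\rho_i>0$.
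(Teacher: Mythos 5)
Your proposal is correct and follows essentially the same route as the paper's own proof: choose $e=(1,1)\in K$, assume $(u,v)=T(u,v)+\lambda(1,1)$ on $\partial V_{\rho_1,\rho_2}$, apply $\alpha^{\rho_1,\rho_2}_{11}$ to the component equation, solve for $\alpha^{\rho_1,\rho_2}_{11}[u]$ using $\alpha^{\rho_1,\rho_2}_{11}[\gamma_1]<1$, substitute back, restrict the integrals to $[a_1,b_1]$, and take the infimum to contradict \eqref{indos}. The only cosmetic difference is that you package the bootstrap as a lower bound on $H_1[u,v]$ before reinserting it, whereas the paper substitutes the bound on $\alpha^{\rho_1,\rho_2}_{11}[u]$ directly into the pointwise estimate for $u(t)$; the algebra is identical.
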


\begin{proof}
Let $e(t)\equiv 1$ for $t\in [0,1]$. Then $(1,1)\in K$. We prove that
\begin{equation*}
(u,v)\ne T(u,v)+\lambda (1,1)\quad\text{for } (u,v)\in \partial V_{\rho_1,\rho_2}\quad\text{and } \lambda\geq 0.
\end{equation*}
In fact, if this does not happen, there exist $(u,v)\in \partial V_{\rho_1,\rho_2}$ and $\lambda \geq 0$ such that $(u,v)=T(u,v)+\lambda (1,1)$.
Without loss of generality, we can assume that for all $t\in [a_1,b_1]$ we have
$\rho_1\leq u(t)\leq {\rho_1/c_1}$,  $\min u(t)=\rho_1$ and  $-\rho_2/c_2\leq v(t)\leq {\rho_2/c_2}$.
For $t\in [0,1]$, we have
\begin{align*}
u(t)&=\gamma_1(t)H_1 [u,v]+F_{1}(u,v)(t)+ \lambda,
\end{align*}
thus, applying $\alpha_{11}^{\rho_1,\rho_2}$ to both sides of the equality, we obtain
\begin{align*}
\alpha_{11}^{\rho_1,\rho_2}[u]&=\alpha_{11}^{\rho_1,\rho_2}[\gamma_1]H_1[u,v]+\alpha_{11}^{\rho_1,\rho_2}[F_1(u,v)]+
 \alpha_{11}^{\rho_1,\rho_2}[\lambda]\\
&\geq \alpha_{11}^{\rho_1,\rho_2}[\gamma_1](A_1^{\rho_1,\rho_2}+\alpha_{11}^{\rho_1,\rho_2}[u]+\alpha_{12}^{\rho_1,\rho_2}[v])
+\alpha_{11}^{\rho_1,\rho_2}[F_1(u,v)]\\
&\geq \alpha_{11}^{\rho_1,\rho_2}[\gamma_1](A_1^{\rho_1,\rho_2}+\alpha_{11}^{\rho_1,\rho_2}[u])
+\int_{0}^{1}\mathcal K_{1}(s)g_{1}(s)f_1(u(s),v(s))\,ds.
\end{align*}
We get
\begin{align*}
(1-\alpha_{11}^{\rho_1,\rho_2}[\gamma_1])\alpha_{11}^{\rho_1,\rho_2}[u]
&\geq \alpha_{11}^{\rho_1,\rho_2}[\gamma_1]A_1^{\rho_1,\rho_2}+\int_{0}^{1}\mathcal K_{1}(s)g_{1}(s)f_1(u(s),v(s))\,ds
\end{align*}
and
\begin{align*}
\alpha_{11}^{\rho_1,\rho_2}[u]
&\geq \frac{\alpha_{11}^{\rho_1,\rho_2}[\gamma_1]A_1^{\rho_1,\rho_2}}{1-\alpha_{11}^{\rho_1,\rho_2}[\gamma_1]}
+\frac{1}{1-\alpha_{11}^{\rho_1,\rho_2}[\gamma_1]} \int_{0}^{1}\mathcal K_{1}(s)g_{1}(s)f_1(u(s),v(s))\,ds.
\end{align*}
Consequently, for $t\in [a_1,b_1]$, we have
\begin{align*}
u(t)&\geq \gamma_{1}(t)\left(A_1^{\rho_1,\rho_2}+\alpha_{11}^{\rho_1,\rho_2}[u]+\alpha_{12}^{\rho_1,\rho_2}[v]\right)
+\int_{0}^{1}k_{1}(t,s)g_{1}(s)f_1(u(s),v(s))\,ds+\lambda\\
 &\geq \gamma_{1}(t)A_1^{\rho_1,\rho_2}+\gamma_{1}(t)\alpha_{11}^{\rho_1,\rho_2}[u]+\int_{0}^{1}k_{1}(t,s)g_{1}(s)f_1(u(s),v(s))\,ds+\lambda\\
&\geq \gamma_{1}(t)A_1^{\rho_1,\rho_2}+\gamma_{1}(t)
\frac{\alpha_{11} ^{\rho_1,\rho_2}[\gamma_1]A_1^{\rho_1,\rho_2}}{1-\alpha_{11}^{\rho_1,\rho_2}[\gamma_1]}+
\frac{\gamma_{1}(t)}{1-\alpha_{11}^{\rho_1,\rho_2}[\gamma_1]}\int_{0}^{1}\mathcal K_{1}(s)g_{1}(s)f_1(u(s),v(s))\,ds\\
&\,\,\,\,\,\,+\int_{0}^{1}k_{1}(t,s)g_{1}(s)f_1(u(s),v(s))\,ds+\lambda\\
&\geq \frac{\gamma_1(t)A_1^{\rho_1,\rho_2}}{1-\alpha_{11}^{\rho_1,\rho_2}[\gamma_1]}+
\frac{\gamma_1(t)}{1-\alpha_{11}^{\rho_1,\rho_2}[\gamma_1]}\int_{a_1}^{b_1}\mathcal K_{1}(s)g_{1}(s)f_1(u(s),v(s))\,ds\\
&\,\,\,\,\,\,+\int_{a_1}^{b_1}k_{1}(t,s)g_{1}(s)f_1(u(s),v(s))\,ds+\lambda\\
&=\rho_1\frac{\gamma_1(t)A_1^{\rho_1,\rho_2}}{\rho_1(1-\alpha_{11}^{\rho_1,\rho_2}[\gamma_1])}
+\rho_1\frac{\gamma_1(t)}{1-\alpha_{11}^{\rho_1,\rho_2}[\gamma_1]}\int_{a_1}^{b_1}\mathcal K_{1}(s)g_{1}(s)
\frac{f_1(u(s),v(s))}{\rho_1}\,ds\\
&\,\,\,\,\,\,+\rho_1\,\int_{a_1}^{b_1}k_{1}(t,s)g_{1}(s)\frac{f_1(u(s),v(s))}{\rho_1}\,ds+\lambda
\end{align*}
and therefore, taking the infimum over $[a_1,b_1]$, we obtain
\begin{align*}
\rho_1=&\min_{t \in [a_1,b_1]}u(t)  \geq  \rho_1\frac{c_{\gamma_1}\|\gamma_1\|A_1^{\rho_1,\rho_2}}
{\rho_1(1-\alpha_{11}^{\rho_1,\rho_2}[\gamma_1])}\\
&+\rho_1f_{1,(\rho_1, {\rho_2})}\Bigl(\frac{c_{\gamma_1}\|\gamma_1\|}{1-\alpha_{11}^{\rho_1,\rho_2}[\gamma_1]}
\int_{a_1}^{b_1}\mathcal K_{1}(s)g_{1}(s)\,ds+\inf_{t\in[a_1,b_1]}\int_{a_1}^{b_1}k_{1}(t,s)g_{1}(s)\,ds\Bigr)+\lambda.
\end{align*}
Using the hypothesis \eqref{indos} we obtain $\rho_1>\rho_1 +\lambda $, a contradiction since $\lambda\geq 0$.
\end{proof}

In the following Lemma provides a result of index 0 on $V_{\rho_1,\rho_2}$, by controlling the growth of just one nonlinearity $f_i$ in a larger domain. Nonlinearities with different growths were studied also in~\cite{nupa, gipp-nonlin, gipp-nodea, precup1, precup2, ya1}.

\begin{lem}
Assume that
\begin{enumerate}
\item[$(\mathrm{I}_{\rho_1,\rho_2 }^{0})^{\circ}$]  there exist $\rho_1,\rho_2 >0$, $A^{\rho_1,\rho_2}_{1},A^{\rho_1,\rho_2}_{2}\geq 0$,  linear functionals $\alpha^{\rho_1,\rho_2}_{ij}[\cdot]:K_{j}\rightarrow [0,+\infty)$, involving positive Stieltjes measures given by
$$
\alpha^{\rho_1,\rho_2}_{ij}[w]=\int_0^1 w(t)\,dC_{ij}(t),
$$
where $C_{ij}$ is of bounded variation, such that for almost one $i=1,2$
\begin{itemize}
\item $\alpha^{\rho_1,\rho_2}_{ii}[\gamma_{i}]<1$,
\item $H_{i}[u,v]\geq A_i^{\rho_1,\rho_2}+\alpha^{\rho_1,\rho_2}_{i1}[u]+\alpha^{\rho_1,\rho_2}_{i2}[v]$ for  $(u,v)\in \partial
V_{\rho_1,\rho_2}\,$,
\item the following inequality holds:
\begin{equation}\label{diamante}
 f_{1,(\rho_1,\rho_2)}^{\circ} \Bigl(\frac{ c_{\gamma_i}|| \gamma _{i}||}{1 -\alpha_{ii}^{\rho_1,\rho_2}[\gamma_i]}\int_{a_i}^{b_i}\mathcal K_{i}(s)g_{i}(s)\,ds +\dfrac{1}{M_i}\Bigr)+ \dfrac{c_{\gamma_i}|| \gamma _{i}||A_i^{\rho_1,\rho_2}}{\rho_i(1-\alpha_{ii}^{\rho_1,\rho_2}[\gamma_i])}>1,
\end{equation}{}
where
\begin{align*}
f^{\circ}_{1,(\rho_1 ,{\rho_2})}=&\inf \Bigl\{\frac{f_1(u,v)}{ \rho_1}:\; (u,v)\in[0,\rho_1/c_1]\times[-\rho_2/c_2, \rho_2/c_2]\Bigr\},\\
f^{\circ}_{2,(\rho_1 ,{\rho_2})}=&\inf \Bigl\{\frac{f_2(u,v)}{ \rho_2}:\; (u,v)\in [0,\rho_1/c_1]\times[0,\rho_2/c_2]\Bigr\}.
\end{align*}
\end{itemize}
\end{enumerate}
Then $i_{K}(T,V_{\rho_1,\rho_2})=0$.
\end{lem}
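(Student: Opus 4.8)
The plan is to argue exactly as in the proof of Lemma~\ref{idx0n1}, using part~(1) of the fixed point index lemma recalled above: with $e=(1,1)\in K\setminus\{0\}$ we show that
\[
(u,v)\neq T(u,v)+\lambda(1,1)\quad\text{for all }(u,v)\in\partial V_{\rho_1,\rho_2}\text{ and all }\lambda\geq 0 .
\]
Suppose not; then $(u,v)=T(u,v)+\lambda(1,1)$ for some $(u,v)\in\partial V_{\rho_1,\rho_2}$ and some $\lambda\geq 0$. Fix an index $i\in\{1,2\}$ for which the hypotheses of $(\mathrm{I}_{\rho_1,\rho_2}^{0})^{\circ}$ hold; to fix ideas, say $i=1$. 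The key observation — and the only real departure from Lemma~\ref{idx0n1} — is that we do \emph{not} need to know which of the two components attains its minimum on $\partial V_{\rho_1,\rho_2}$: by the description of $\partial V_{\rho_1,\rho_2}$ we always have $\min_{t\in[a_1,b_1]}u(t)\leq\rho_1$, while $(u,v)\in K\cap\partial V_{\rho_1,\rho_2}$ forces $\|u\|\leq\rho_1/c_1$ and $\|v\|\leq\rho_2/c_2$, so that $u(s)\in[0,\rho_1/c_1]$ and $v(s)\in[-\rho_2/c_2,\rho_2/c_2]$ for every $s\in[a_1,b_1]$. Consequently $f_1(u(s),v(s))\geq\rho_1\, f^{\circ}_{1,(\rho_1,\rho_2)}$ on $[a_1,b_1]$, and this uniform lower bound on $f_1$ is all we shall use.

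From the first coordinate $u(t)=\gamma_1(t)H_1[u,v]+F_1(u,v)(t)+\lambda$, using $H_1[u,v]\geq A_1^{\rho_1,\rho_2}+\alpha_{11}^{\rho_1,\rho_2}[u]+\alpha_{12}^{\rho_1,\rho_2}[v]\geq A_1^{\rho_1,\rho_2}+\alpha_{11}^{\rho_1,\rho_2}[u]$ (since $\alpha_{12}^{\rho_1,\rho_2}[v]\geq 0$) and applying $\alpha_{11}^{\rho_1,\rho_2}$ to the equality, the condition $\alpha_{11}^{\rho_1,\rho_2}[\gamma_1]<1$ gives a lower bound for $\alpha_{11}^{\rho_1,\rho_2}[u]$; substituting it back into $u(t)\geq\gamma_1(t)\bigl(A_1^{\rho_1,\rho_2}+\alpha_{11}^{\rho_1,\rho_2}[u]\bigr)+\int_0^1 k_1(t,s)g_1(s)f_1(u(s),v(s))\,ds+\lambda$, restricting the integrals to $[a_1,b_1]$ (all integrands being non-negative there), using $\gamma_1(t)\geq c_{\gamma_1}\|\gamma_1\|$ on $[a_1,b_1]$ together with $f_1(u(s),v(s))\geq\rho_1 f^{\circ}_{1,(\rho_1,\rho_2)}$, and taking the infimum over $t\in[a_1,b_1]$, one arrives at
\[
\min_{t\in[a_1,b_1]}u(t)\geq\rho_1\Bigl[\frac{c_{\gamma_1}\|\gamma_1\|A_1^{\rho_1,\rho_2}}{\rho_1(1-\alpha_{11}^{\rho_1,\rho_2}[\gamma_1])}+f^{\circ}_{1,(\rho_1,\rho_2)}\Bigl(\frac{c_{\gamma_1}\|\gamma_1\|}{1-\alpha_{11}^{\rho_1,\rho_2}[\gamma_1]}\int_{a_1}^{b_1}\mathcal K_1(s)g_1(s)\,ds+\frac{1}{M_1}\Bigr)\Bigr]+\lambda .
\]
By \eqref{diamante} the bracket is strictly larger than $1$, so $\min_{t\in[a_1,b_1]}u(t)>\rho_1+\lambda\geq\rho_1$, contradicting $\min_{t\in[a_1,b_1]}u(t)\leq\rho_1$. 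If instead the hypotheses hold for $i=2$, the identical scheme applied to the second coordinate on $[a_2,b_2]$ goes through word for word, the only difference being that there $v(s)\in[0,\rho_2/c_2]$ (as $v\in K_2$ is non-negative on $[a_2,b_2]$), which is precisely the box appearing in $f^{\circ}_{2,(\rho_1,\rho_2)}$. In both cases part~(1) of the index lemma yields $i_K(T,V_{\rho_1,\rho_2})=0$.

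The substantive point, and the only place where this differs from Lemma~\ref{idx0n1}, is that on $\partial V_{\rho_1,\rho_2}$ one cannot tell which component realizes the minimum, so the lower estimate on $f_i$ must hold uniformly over the full boxes $[0,\rho_1/c_1]\times[-\rho_2/c_2,\rho_2/c_2]$ (for $i=1$) and $[0,\rho_1/c_1]\times[0,\rho_2/c_2]$ (for $i=2$); checking that $(u(s),v(s))$ indeed lies in the relevant box for $s$ in the corresponding subinterval, using only membership of $(u,v)$ in $K$ and the boundary characterization of $V_{\rho_1,\rho_2}$, is the crux. After that, the argument is a verbatim repetition of the computation in Lemma~\ref{idx0n1}, with $f_{i,(\rho_1,\rho_2)}$ replaced by $f^{\circ}_{i,(\rho_1,\rho_2)}$.
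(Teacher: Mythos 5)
Your proposal is correct and follows essentially the same route as the paper: fix the index $i$ for which $(\mathrm{I}_{\rho_1,\rho_2}^{0})^{\circ}$ holds, suppose $(u,v)=T(u,v)+\lambda(1,1)$ on $\partial V_{\rho_1,\rho_2}$, use that $\min_{t\in[a_i,b_i]}$ of the $i$-th component is at most $\rho_i$ while $(u(s),v(s))$ lies in the larger box defining $f^{\circ}_{i,(\rho_1,\rho_2)}$, and then repeat the computation of Lemma~\ref{idx0n1} to reach $\rho_i>\rho_i+\lambda$. Your explicit verification of which box $(u(s),v(s))$ lies in (including the sign information from $K_1$ and $K_2$) is exactly the point the paper leaves implicit.
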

\begin{proof}
Suppose that the condition~\eqref{diamante} holds for $i=1$. Let $(u,v)\in \partial V_{\rho_1,\rho_2 }$ and $\lambda \geq 0$ such
that $(u,v)=T(u,v)+\lambda (1,1)$. Therefore, for all $t\in [a_1,b_1]$, we have $\min u(t)\leq \rho_1$, $0 \leq u(t)\leq \rho_1/c_1$ and
$-\rho_2/c_2\leq v(t)\leq \rho_2/c_2$. For $t\in [0,1]$, we have
\begin{equation*}
  u(t)=\gamma_{1}(t)H_{1}[u,v]+\int_{0}^{1}k_1(t,s)g_1(s)f_1(u(s),v(s))\,ds+{\lambda}.
\end{equation*}
As in the proof of Lemma \ref{idx0n1}, taking the minimum over $[a_{1},b_{1}]$ gives
\begin{align*}
\rho_1\geq
&\min_{t\in [a_1,b_1]}u(t) \geq  \rho_1\frac{c_{\gamma_1}\|\gamma_1\|A_1^{\rho_1,\rho_2}}{\rho_1(1-\alpha_{11}^{\rho_1,\rho_2}[\gamma_1])}\\
&+\rho_1f_{1,(\rho_1, {\rho_2})}^{\circ}\Bigl(\frac{c_{\gamma_1}\|\gamma_1\|}{1-\alpha_{11}^{\rho_1,\rho_2}[\gamma_1]}\int_{a_1}^{b_1}\mathcal K_{1}(s)g_{1}(s)\,ds+\inf_{t\in[a_1,b_1]}\int_{a_1}^{b_1}k_{1}(t,s)g_{1}(s)\,ds\Bigr)+\lambda.
\end{align*}
Using the hypothesis \eqref{diamante} we obtain $\rho_1>\rho_1+\lambda$, a contradiction.
\end{proof}

\begin{rem}
In the case of $[a_1,b_1]=[a_2,b_2]$  the assumptions on the nonlinearities $f_i$ can be relaxed, for example in condition
$(\mathrm{I}_{\rho _{1},\rho_2}^{0})^{\circ}$ we have to control the growth of $f_i$ in the smaller set ${[0,\rho_1/c_1]\times[0, \rho_2/c_2]}$. We refer to the paper \cite{gipp-nodea} for the statement of similar results.

\end{rem}
We now state  a result regarding the existence of at least one, two or three nontrivial solutions. The proof, that follows by the properties of fixed point index, is omitted. We can to state results for four or more nontrivial solutions by expanding the lists in conditions $(S_{5}),(S_{6})$, see for
example the paper~\cite{kljdeds}.

\begin{thm}\label{mult-sys}
If one of the following conditions holds:
\begin{enumerate}
\item[$(S_{1})$]  For $i=1,2$ there exist $\rho _{i},r _{i}\in (0,\infty )$ with $\rho_{i}/c_i<r _{i}$ such that $(\mathrm{I}_{\rho _{1},\rho_2}^{0})\;\;[\text{or}\;(\mathrm{I}_{\rho _{1},\rho_2}^{0})^{\circ }]$, $(\mathrm{I}_{r _{1},r_2}^{1})$ hold;
\item[$(S_{2})$] For $i=1,2$ there exist $\rho _{i},r _{i}\in (0,\infty )$ with $\rho_{i}<r _{i}$ such that $(\mathrm{I}_{\rho _{1},\rho_2}^{1}),\;\;(\mathrm{I}_{r _{1},r_2}^{0})$ hold;
\end{enumerate}
then the system \eqref{syst} has at least one nontrivial solution in $K$.\\
 If one of the following conditions holds:
\begin{enumerate}
\item[$(S_{3})$] For $i=1,2$ there exist $\rho _{i},r _{i},s_i\in (0,\infty )$ with $\rho _{i}/c_i<r_i <s _{i}$ such that $(\mathrm{I}_{\rho_{1},\rho_2}^{0})$, $[\text{or}\;(\mathrm{I}_{\rho _{1},\rho_2}^{0})^{\circ }],\;\;(\mathrm{I}_{r _{1},r_2}^{1})$ $\text{and}\;\;(\mathrm{I}_{s _{1},s_2}^{0})$ hold;
\item[$(S_{4})$] For $i=1,2$ there exist $\rho _{i},r _{i},s_i\in (0,\infty )$ with $\rho _{i}<r _{i}$ and $r _{i}/c_i<s _{i}$ such that $(\mathrm{I}
_{\rho _{1},\rho_2}^{1}),\;\;(\mathrm{I}_{r _{1},r_2}^{0})$ $\text{and}\;\;(\mathrm{I}_{s _{1},s_2}^{1})$ hold;
\end{enumerate}
then the system \eqref{syst} has at least two nontrivial solution in $K$.\\
If one of the following conditions holds:
\begin{enumerate}
\item[$(S_{5})$] For $i=1,2$ there exist $\rho _{i},r _{i},s_i,\sigma_i\in (0,\infty )$ with $\rho _{i}/c_i<r _{i}<s _{i}$ and $s _{i}/c_i<\sigma_{i}$ such that $(\mathrm{I}_{\rho _{1},\rho_2}^{0})\;\;[\text{or}\;(\mathrm{I}_{\rho _{1},\rho_2}^{0})^{\circ }],$ $(\mathrm{I}_{r _{1},r_2}^{1}),\;\;
(\mathrm{I}_{s_1,s_2}^{0})\;\;\text{and}\;\;(\mathrm{I}_{\sigma _{1},\sigma_2}^{1})$ hold;
\item[$(S_{6})$] For $i=1,2$ there exist $\rho _{i},r _{i},s_i,\sigma_i\in(0,\infty )$ with $\rho _{i}<r _{i}$ and $r _{i}/c_i<s _{i}<\sigma _{i}$
such that $(\mathrm{I}_{\rho _{1},\rho_2}^{1}),\;\;(\mathrm{I}_{r_{1},r_2}^{0}),\;\;(\mathrm{I}_{s _{1},s_2}^{1})$ $\text{and}\;\;
(\mathrm{I}_{\sigma _{1},\sigma_2}^{0})$ hold;
\end{enumerate}
then the system \eqref{syst} has at least three nontrivial solution in $K$.
\end{thm}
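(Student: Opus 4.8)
The plan is to read off the conclusion from the index computations already obtained, so that nothing beyond bookkeeping is needed. Recall that $T\colon K\to K$ is compact. If $T$ had a fixed point on the boundary (relative to $K$) of any of the sets $K_{\cdot,\cdot}$ or $V_{\cdot,\cdot}$ occurring below, that fixed point would already be a nontrivial solution of \eqref{syst} --- its norm being bounded below by one of the radii involved, as the structural description of $\partial K_{\rho_1,\rho_2}$ and $\partial V_{\rho_1,\rho_2}$ recalled above shows --- so we may assume $T$ has no fixed point on those boundaries, and hence the index $i_K(T,\cdot)$ is defined on each of them. I would also use at the outset the following facts, valid for every $\rho_1,\rho_2>0$: $K_{\rho_1,\rho_2}\subset V_{\rho_1,\rho_2}\subset K_{\rho_1/c_1,\rho_2/c_2}$, both sets contain $(0,0)$, and both are open and bounded in $K$ with closure strictly contained in $K$. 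Combining these, a strict inequality $\rho_i/c_i<r_i$ for $i=1,2$ gives $\overline{V_{\rho_1,\rho_2}}\subset K_{r_1,r_2}$, while $\rho_i<r_i$ gives $\overline{K_{\rho_1,\rho_2}}\subset V_{r_1,r_2}$; establishing these two inclusions is the only preparatory step.

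For $(S_1)$: condition $(\mathrm{I}_{\rho_1,\rho_2}^{0})$ --- or $(\mathrm{I}_{\rho_1,\rho_2}^{0})^{\circ}$ --- gives $i_K(T,V_{\rho_1,\rho_2})=0$, and $(\mathrm{I}_{r_1,r_2}^{1})$, via Lemma~\ref{ind1L} applied with radii $r_1,r_2$, gives $i_K(T,K_{r_1,r_2})=1$; since $\rho_i/c_i<r_i$ forces $\overline{V_{\rho_1,\rho_2}}\subset K_{r_1,r_2}$, part (4) of the fixed point index Lemma produces a fixed point of $T$ in $K_{r_1,r_2}\setminus\overline{V_{\rho_1,\rho_2}}$, which is nontrivial because $(0,0)\in V_{\rho_1,\rho_2}$. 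The case $(S_2)$ is the mirror image: $i_K(T,K_{\rho_1,\rho_2})=1$ from $(\mathrm{I}_{\rho_1,\rho_2}^{1})$, $i_K(T,V_{r_1,r_2})=0$ from $(\mathrm{I}_{r_1,r_2}^{0})$ (Lemma~\ref{idx0n1}), $\overline{K_{\rho_1,\rho_2}}\subset V_{r_1,r_2}$ since $\rho_i<r_i$, and a nontrivial fixed point is obtained in $V_{r_1,r_2}\setminus\overline{K_{\rho_1,\rho_2}}$.

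The remaining cases simply iterate this single step along a finite nested chain of these sets with indices alternating between $0$ and $1$. Under $(S_3)$ the chain $V_{\rho_1,\rho_2}\subset K_{r_1,r_2}\subset V_{s_1,s_2}$ --- the two inclusions of closures following from $\rho_i/c_i<r_i$ and $r_i<s_i$ --- carries indices $0,1,0$, so $T$ has a fixed point in $K_{r_1,r_2}\setminus\overline{V_{\rho_1,\rho_2}}$ and a second one in $V_{s_1,s_2}\setminus\overline{K_{r_1,r_2}}$; these lie in disjoint subsets of $K$ and are both nonzero. Under $(S_4)$ one uses instead $K_{\rho_1,\rho_2}\subset V_{r_1,r_2}\subset K_{s_1,s_2}$, with indices $1,0,1$, the closure inclusions coming from $\rho_i<r_i$ and $r_i/c_i<s_i$. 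Finally, $(S_5)$ and $(S_6)$ are handled identically with a chain of four nested sets and indices $0,1,0,1$ (respectively $1,0,1,0$), producing three fixed points lying in three pairwise disjoint ``annular'' regions, each nonzero since the corresponding region excludes $(0,0)$. I expect no genuine obstacle here: the whole argument is routine once the index values are in hand, and the only points requiring a little care are to check that the strict inequalities between consecutive radii, combined with $V_{\rho_1,\rho_2}\subset K_{\rho_1/c_1,\rho_2/c_2}$, really do yield the strict inclusions of closures that part (4) of the index Lemma requires, and to note that every fixed point produced lies in a set from which the origin has been removed, hence has positive norm.
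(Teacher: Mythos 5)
Your argument is correct and is precisely the proof the paper intends but omits (``The proof, that follows by the properties of fixed point index, is omitted''): the nesting $K_{\rho_1,\rho_2}\subset V_{\rho_1,\rho_2}\subset K_{\rho_1/c_1,\rho_2/c_2}$ together with the strict radius inequalities gives the inclusions of closures, and part (4) of the index lemma applied along the alternating $0$--$1$ chain yields the stated number of fixed points, each nontrivial since every annular region excludes the origin (or, if a fixed point sits on one of the boundaries, its norm is already bounded below by a radius). No gaps.
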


The results of this Subsection, for example Theorem~\ref{mult-sys}, can be applied to the integral system~\eqref{syst}, yielding results for the elliptic system \eqref{ellbvpsec}. Similar statements were given in~\cite{lan-lin-na, lanwebb} in the case of annular domains.

In the following example we illustrate the applicability of Theorem~\ref{mult-sys}.
\begin{ex}
In $\mathbb{R}^3$ consider  the elliptic system
\begin{equation}\label{ellbvpex}
\begin{cases}
\Delta u + |x|^{-4}\left[\frac{3}{10}(u^3+|v|^3)+\frac{1}{2}\right]=0,\,\, |x|\in [1,+\infty), \\
\Delta v + |x|^{-4}(u^{\frac{1}{2}}+v^2+1)=0,\,\, |x|\in [1,+\infty),\\
u(x)=2 u(4 x)\,\,\mbox{ for }{x\in \partial B_{1}},\,\, \displaystyle \lim_{|x|\to+\infty}u(|x|)=\frac{1}{10} \sqrt{u(3x)}+\frac{1}{10} v^3\left(\frac{7}{2}x\right),\\
v(x)=\displaystyle -\frac{4}{3} \frac{\partial v}{\partial r}(2 x)\,\,\mbox{ for }{x\in \partial B_{1}},\,\,\displaystyle \lim_{|x|\to+\infty}v(|x|)= \frac{1}{5}\sqrt{u\left(\frac{7}{3}x\right)}+\frac{1}{10} v^2\left(\frac{5}{2}x\right).
\end{cases}
\end{equation}

To the system~\eqref{ellbvpex} we associate the system of second order ODEs
\begin{equation*}
\begin{cases}
u''(t)+ \frac{3}{10}\left(u^3(t)+|v|^3(t)\right)+\frac{1}{2}=0,\,\, t\in [0,1], \\
v''(t) +u^{\frac{1}{2}}(t)+v^2(t)+1=0,\,\, t\in [0,1],\\
u(0)=\frac{1}{10} \sqrt{u\left(\frac{1}{3}\right)}+\frac{1}{10} v^3\left(\frac{2}{7}\right),\,\,\,\,\,\; u(1)=2u\left(\frac{1}{4}\right),\\
v(0)=\frac{1}{5}\sqrt{u\left(\frac{3}{7}\right)}+\frac{1}{10} v^2\left(\frac{2}{5}\right),\;\,\,\,\,\, v(1)=\frac{1}{3}v'\left(\frac{1}{2}\right).%
\end{cases}
\end{equation*}
By direct computation, we have
\begin{equation*}
 \frac{1}{m_{1}}=\sup_{t\in [0,1]}\int_{0}^{1} k_{1}(t,s)\,ds=\frac{49}{128}\,\,\,\,\, \mbox{  and  }\,\,\,\,\, \frac{1}{m_{2}}=\sup_{t\in [0,1]}\int_{0}^{1} |k_{2}(t,s)|\,ds=\frac{1}{8}.
 \end{equation*}{}
We fix $[a_1,b_1]=[a_2,b_2]=[\frac{1}{4},\frac{1}{2}]$, obtaining $c_1=\frac{1}{32},\, \, c_2=\frac{1}{4}$,
$$
 \frac{1}{M_{1}} = \inf_{t\in
[1/4,1/2]}\int_{1/4}^{1/2} k_1(t,s)\,ds=\frac{3}{16}
$$
and
$$
 \frac{1}{M_{2}} = \inf_{t\in
[1/4,1/2]}\int_{1/4}^{1/2} k_2(t,s)\,ds=\frac{3}{32}.
$$
With the choice of
\begin{align*}
&\rho_1=\frac{1}{16},\,\,\,\rho_2=\frac{1}{32},\,\,\,\alpha^{\rho_1,\rho_2}_{11}[u]=\alpha^{\rho_1,\rho_2}_{21}[u]=0,\\
&r_1=2.01,\,\,\, r_2=1,\,\,\, A_1^{r_1,r_2}=\frac{1}{10}\sqrt{2.01},\,\,\,\alpha^{r_1,r_2}_{12}[v]=\frac{1}{10}\,
v\left(\frac{2}{7}\right),\,\,\,\\
&\,\,\,\,\,\,\,\,\,\,\,\,\,\,\,\,\,\,\,\,\,\,\,\,\,\,\,\,\,\,\,\,\,\,\,\,\,\,\,\,\,\,\,\,\,\,\,\,\, A_2^{r_1,r_2}=\frac{1}{5}\sqrt{2.01},\,\,\,\alpha^{r_1,r_2}_{22}[v]=\frac{1}{10}\,
v\left(\frac{2}{5}\right),\\
&s_1=5,\,\,\,s_2=11,\,\, \,\alpha^{s_1,s_2}_{11}[u]=\frac{1}{130}\,
u\left(\frac{1}{3}\right),\,\,\, \alpha^{s_1,s_2}_{21}[u]=\frac{1}{100}\, u\left(\frac{3}{7}\right),\,\,\, \alpha^{s_1,s_2}_{22}[v]=\frac{1}{100} v\left(\frac{2}{5}\right),
\end{align*}
we obtain
\begin{align*}
&\alpha^{r_1,r_2}_{22}[\gamma_{2}]<1,\,\,\,\,\alpha^{s_1,s_2}_{11}[\gamma_{1}]<1,\,\,\,\,\alpha^{s_1,s_2}_{22}[\gamma_{2}]<1,\\
&H_{1}[u,v]\leq\, A_1^{r_1,r_2}+\alpha^{r_1,r_2}_{12}[v],\,\,\,H_{2}[u,v]\leq\, A_2^{r_1,r_2}+\alpha^{r_1,r_2}_{22}[v],\,\,\,(u,v)\in [0,r_1]\times[0,r_2],\,\\
&H_{1}[u,v]\geq
\alpha^{s_1,s_2}_{11}[u],\,\,\,
(u,v)\in [s_1,s_1/c_1]\times[0,s_2/c_2],\,\\
&H_{2}[u,v]\geq
\alpha^{s_1,s_2}_{21}[u]+\alpha^{s_1,s_2}_{22}[v],\,\,\,
(u,v)\in [0,s_1/c_1]\times[s_2,s_2/c_2],\,\\
\inf & \Bigl\{ f_2(u,v):\; (u,v)\in [0,32\rho_1]\times[0,4\rho_2]
\Bigr\}= f_2(0,0)=1>0.33, \\
\sup &\Bigl\{ f_1(u,v):\; (u,v)\in [0, r_1]\times[-r_2,
r_2]\Bigr\}=f_1\left(2.01,1\right)=3.236
<3.878   , \\
\sup &\Bigl\{ f_2(u,v):\; (u,v)\in [0, r_1]\times[-r_2,
r_2]\Bigr\}=f_2\left(2.01,1\right)=3.417
<5.125, \\
\inf &\Bigl\{ f_1(u,v):\; (u,v)\in
[s_1,32s_1]\times[0,4s_2]\Bigr\}=f_1(5,0)=38
>26.557, \\
\inf &\Bigl\{ f_2(u,v):\; (u,v)\in [0,32s_1]\times[s_2,
4s_2]\Bigr\}=f_2(0,11)=122 >117.075.
\end{align*}
Thus the conditions $(\mathrm{I}^{0}_{\rho_{1},\rho_2})^{\circ}$, $(\mathrm{I}%
^{1}_{r_1,r_{2}})$ and $(\mathrm{I}^{0}_{s_1,s_{2}})$ are satisfied and, from Theorem~\ref{mult-sys},
the system~\eqref{ellbvpex} has at least two nontrivial solutions.
\end{ex}
\subsection{Non-existence results for perturbed integral systems}
We now present some non-existence results for the integral system~\eqref{syst}. We begin with a Theorem where the nonlinearities and the functionals are enough ``small''.
\begin{thm}\label{nonexi1s}
Assume that, for $i=1,2$, there exist $A_i>0$ and $\lambda_i>0$ such that for $f_i:[0,+\infty)\times \mathbb{R}\to [0,+\infty)$ one has
\begin{itemize}
\item  $H_{i}[u_1,u_2]\leq A_i\|u_i\|$ for  $(u_1,u_2)\in K$,
\item $f_i(z_1,z_2)\leq\lambda_{i}m_i|z_i|$ for $(z_1,z_2)\in [0,+\infty)\times \mathbb{R}$,
\item $\|\gamma_i\|A_i+\lambda_i<1$.
\end{itemize}
Then there is no nontrivial solution of the system~\eqref{syst} in $K$.
\end{thm}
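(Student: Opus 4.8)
The plan is to argue by contradiction. Suppose $(u,v)\in K$ is a nontrivial solution of \eqref{syst}, so that $\|(u,v)\|>0$. Since $(u,v)\in K_1\times K_2$ we have $u\ge 0$ on $[0,1]$, while $v$ is allowed to change sign; in any case $(u(s),v(s))\in[0,+\infty)\times\mathbb{R}$ for every $s\in[0,1]$, so the growth hypotheses on the $f_i$ apply pointwise along the solution.

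First I would estimate the first component. From the first equation of \eqref{syst}, for $t\in[0,1]$,
\[
|u(t)|\le |\gamma_1(t)|\,H_1[u,v]+\int_0^1 |k_1(t,s)|\,g_1(s)\,f_1(u(s),v(s))\,ds .
\]
Using $H_1[u,v]\le A_1\|u\|$, the bound $f_1(u(s),v(s))\le \lambda_1 m_1|u(s)|\le \lambda_1 m_1\|u\|$, and the definition $\tfrac{1}{m_1}=\sup_{t\in[0,1]}\int_0^1 |k_1(t,s)|g_1(s)\,ds$, taking the supremum over $t\in[0,1]$ yields
\[
\|u\|\le \|\gamma_1\|A_1\|u\|+\lambda_1\|u\|=(\|\gamma_1\|A_1+\lambda_1)\|u\|.
\]
Since $\|\gamma_1\|A_1+\lambda_1<1$, this forces $\|u\|=0$, i.e.\ $u\equiv0$. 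Then I would run the identical argument on the second equation, now using $H_2[u,v]\le A_2\|v\|$, $f_2(u(s),v(s))\le \lambda_2 m_2|v(s)|\le \lambda_2 m_2\|v\|$ and $\tfrac{1}{m_2}=\sup_{t\in[0,1]}\int_0^1|k_2(t,s)|g_2(s)\,ds$ (the absolute value being essential here because $k_2$ changes sign), obtaining $\|v\|\le(\|\gamma_2\|A_2+\lambda_2)\|v\|$ and hence $v\equiv0$. Therefore $\|(u,v)\|=0$, contradicting nontriviality, and the system has no nontrivial solution in $K$.

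The argument is essentially routine; the only point worth stressing is that the a priori estimates for the two components \emph{decouple} completely — the bound for $\|u\|$ never involves $\|v\|$ and vice versa — so no joint smallness condition on the product of the constants is needed, just the two separate inequalities $\|\gamma_i\|A_i+\lambda_i<1$. One should also observe that the sign-changing nature of $v$ (and of the kernel $k_2$) is exactly why the hypotheses are formulated with each $f_i$ defined on $[0,+\infty)\times\mathbb{R}$ and with $|k_2|$ rather than $k_2$ appearing in the definition of $m_2$; everything else follows from the triangle inequality applied to the integral representation \eqref{syst}.
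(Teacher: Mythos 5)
Your proposal is correct and follows essentially the same route as the paper: bound each component via the triangle inequality, use $H_i[u,v]\le A_i\|u_i\|$ and $f_i\le\lambda_i m_i|z_i|$ together with the definition of $m_i$, take the supremum, and conclude $\|u_i\|\le(\|\gamma_i\|A_i+\lambda_i)\|u_i\|$, which forces $\|u_i\|=0$. The only cosmetic difference is that the paper phrases it as assuming $\|v\|=\nu>0$ and deriving $\nu<\nu$, whereas you let the decoupled inequalities kill each norm directly; the content is identical.
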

\begin{proof}
Suppose that there exists  $(u,v)\in K$ such that $ (u,v)=T(u,v)$ and  assume that $|| v|| =\nu>0$.
Then we have, for $t\in[0,1]$,
\begin{align*}
 |v(t)|&=|\gamma_{2}(t)H_{2}[u,v]+F_{2}(u,v)(t)|\\
 &\leq \|\gamma_{2}\|H_{2} [u,v]+\int_0^1|k_{2}(t,s)|\,g_2(s)\,f_2(u(s),v(s))ds\\
  &\leq \|\gamma_{2}\|A_2\|v\|+\lambda_2m_2\int_0^1|k_{2}(t,s)|\,g_2(s)|v(s)|ds\\
  &\leq \|\gamma_{2}\|A_2\nu+\lambda_2m_2\nu \int_0^1|k_{2}(t,s)|\,g_2(s)ds.
\end{align*}
Taking the supremum over $[0,1]$ gives
\begin{align*}
\nu\leq &\nu\Bigl(\|\gamma_{2}\|A_2+\lambda_2 m_2\sup_{t \in [0,1]}\int_0^1|k_{2}(t,s)|\,g_2(s)ds\Bigr).
\end{align*}
We obtain $\nu <\nu$, a contradiction that proves the result.

The case $|| u|| =\nu>0$ is simpler and we omit the proof.
\end{proof}
In the next Theorem the nonlinearities and the functionals are enough ``large''.
\begin{thm}\label{nonexi2s}
Assume that, for $i=1,2$, there exist $A_i>0$ and $\lambda_i>0$ such that for $f_i:[0,+\infty)\times \mathbb{R}\to [0,+\infty)$ one has
\begin{itemize}
\item   $H_{i}[u_1,u_2]\geq A_i\|u_i\|$ for $(u_1,u_2)\in K$,
\item $f_i(z_1,z_2)\geq \lambda_{i}M_i\,z_i$ for $(z_1,z_2)\in [0,+\infty)\times \mathbb{R}$,
\item $c_{\gamma_i}\|\gamma_i\|A_i+\lambda_i>1$.
\end{itemize}
Then there is no nontrivial solution of the system \eqref{syst} in $K$.
\end{thm}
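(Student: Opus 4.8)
The plan is to mirror the argument of Theorem~\ref{nonexi1s}, but now working on the subinterval $[a_2,b_2]$ (or $[a_1,b_1]$) where the cone structure gives a lower bound on the relevant component. Suppose, for contradiction, that $(u,v)\in K$ is a nontrivial fixed point of $T$, and without loss of generality assume $\|v\|=\nu>0$ (the case $\|u\|>0$ being analogous and simpler, since $u\ge 0$ throughout). Because $(u,v)\in K$ we have $v\in K_2$, hence $v(t)\ge c_2\|v\|=c_2\nu\ge 0$ for $t\in[a_2,b_2]$; in particular $v$ is nonnegative there, so the pointwise lower bound $f_2(u(s),v(s))\ge \lambda_2 M_2\, v(s)$ may be applied with a genuinely nonnegative argument.

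The key computation: for $t\in[a_2,b_2]$, start from $v(t)=\gamma_2(t)H_2[u,v]+F_2(u,v)(t)$ and discard the (nonnegative) perturbation term $\gamma_2(t)H_2[u,v]\ge 0$, or better, keep it and use $H_2[u,v]\ge A_2\|v\|=A_2\nu$ together with $\gamma_2(t)\ge c_{\gamma_2}\|\gamma_2\|$ on $[a_2,b_2]$. For the integral term, restrict the integration to $[a_2,b_2]$, use $k_2(t,s)\ge c_{k_2}\Phi_2(s)\ge 0$ on $[a_2,b_2]\times[0,1]$, then plug in $f_2\ge \lambda_2 M_2 v(s)\ge \lambda_2 M_2 c_2\nu$... actually the cleaner route, exactly parallel to Lemma~\ref{idx0n1}, is: $F_2(u,v)(t)\ge \int_{a_2}^{b_2}k_2(t,s)g_2(s)\lambda_2 M_2 v(s)\,ds \ge \lambda_2 M_2 \nu \int_{a_2}^{b_2} k_2(t,s)g_2(s)\,ds$ once one uses $v(s)\ge \min_{[a_2,b_2]}v \ge$ ... here one should instead bound $v(s)$ below by something proportional to $\nu$; since $v\in K_2$, $v(s)\ge c_2\nu$, giving $F_2(u,v)(t)\ge \lambda_2 M_2 c_2 \nu \int_{a_2}^{b_2}k_2(t,s)g_2(s)\,ds$. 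Taking the infimum over $t\in[a_2,b_2]$ and recalling $\frac{1}{M_2}=\inf_{t\in[a_2,b_2]}\int_{a_2}^{b_2}k_2(t,s)g_2(s)\,ds$ yields
\begin{align*}
\min_{t\in[a_2,b_2]} v(t) \;\ge\; c_{\gamma_2}\|\gamma_2\|A_2\nu + \lambda_2 c_2 \nu.
\end{align*}
But $\min_{t\in[a_2,b_2]}v(t)\le \|v\|=\nu$, so $\nu \ge (c_{\gamma_2}\|\gamma_2\|A_2 + \lambda_2 c_2)\nu$.

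The slightly delicate point is matching the constants to the stated hypothesis $c_{\gamma_i}\|\gamma_i\|A_i+\lambda_i>1$: the crude bound above produces the extra factor $c_2$ on $\lambda_2$, which is too weak. To get the sharp constant one must avoid throwing away information — instead of bounding $v(s)\ge c_2\nu$, one applies the functional-free lower bound more carefully by choosing $t_0\in[a_2,b_2]$ with $v(t_0)=\min_{[a_2,b_2]}v$ and running the estimate so that the $\min v$ appears on both sides, i.e. bound $f_2(u(s),v(s))\ge \lambda_2 M_2 v(s)\ge \lambda_2 M_2 \min_{[a_2,b_2]}v$ for $s\in[a_2,b_2]$, obtaining $\min_{[a_2,b_2]}v \ge c_{\gamma_2}\|\gamma_2\|A_2\|v\| + \lambda_2 (\min_{[a_2,b_2]}v)$; combined with $\|v\|\ge \min_{[a_2,b_2]}v$ and writing $m:=\min_{[a_2,b_2]}v>0$ this gives $m \ge c_{\gamma_2}\|\gamma_2\|A_2 m + \lambda_2 m = (c_{\gamma_2}\|\gamma_2\|A_2+\lambda_2)m > m$, the desired contradiction. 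This is the main obstacle — keeping the estimate tight enough — and it is resolved precisely as in the index-zero lemma (Lemma~\ref{idx0n1}), which is why the non-existence hypotheses are the ``global'' analogues of condition $(\mathrm{I}^0_{\rho_1,\rho_2})$ with trivial $\alpha_{ij}$. The case $\|u\|>0$ is handled identically on $[a_1,b_1]$ using $k_1\ge c_{k_1}\Phi_1\ge 0$ and $\frac{1}{M_1}=\inf_{t\in[a_1,b_1]}\int_{a_1}^{b_1}k_1(t,s)g_1(s)\,ds$, and is in fact easier since $u\ge 0$ everywhere.
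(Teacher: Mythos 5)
Your final argument is correct and is essentially the paper's own proof: the authors likewise fix the component whose minimum is positive, set $\theta:=\min_{t\in[a_1,b_1]}u(t)>0$, restrict the integral to $[a_1,b_1]$ where the kernel is nonnegative, use $H_1[u,v]\geq A_1\|u\|\geq A_1\theta$, $\gamma_1(t)\geq c_{\gamma_1}\|\gamma_1\|$ and $f_1(u(s),v(s))\geq \lambda_1 M_1 u(s)\geq \lambda_1 M_1\theta$, and take the infimum over $[a_1,b_1]$ to obtain $\theta\geq (c_{\gamma_1}\|\gamma_1\|A_1+\lambda_1)\theta>\theta$. The only difference is your preliminary detour through the too-crude bound $v(s)\geq c_2\|v\|$, which you correctly identify as losing a factor of $c_2$ and then repair by keeping the minimum on both sides of the inequality, exactly as in the paper.
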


\begin{proof}
Suppose that there exists  $(u,v)\in K$ such that $ (u,v)=T(u,v)$ with $(u,v)\neq 0$ and $\displaystyle \min_{t\in [a_1,b_1]}u(t)=\theta>0$ (the other case is similar). Then we have, for $t\in[a_1,b_1]$,
\begin{align*}
 u(t)&=\gamma_{1}(t)H_{1 }[u,v]+F_{1}(u,v)(t)\\
 &\geq \gamma_{1}(t)H_{1} [u,v]+\int_{a_1}^{b_1}k_{1}(t,s)g_1(s)f_1(u(s),v(s))ds\\
  &\geq c_{\gamma_1} \|\gamma_{1}\|A_1\|u\|+\lambda_1M_1\int_{a_1}^{b_1}k_{1}(t,s)g_1(s)u(s)ds\\
  &\geq c_{\gamma_1}  \|\gamma_{1}\|A_1\theta+\lambda_1M_1\theta \int_{a_1}^{b_1}k_{1}(t,s)g_1(s)ds.
\end{align*}
Taking the infimum for $t\in [a_1,b_1]$ gives
\begin{align*}
\theta\geq &\theta\Bigl(c_{\gamma_1} \|\gamma_{1}\|A_1+\lambda_1 M_1\inf_{t \in [a_1,b_1]}\int_{a_1}^{b_1}k_{1}(t,s)g_1(s)ds\Bigr).
\end{align*}
We obtain $\theta >\theta$, a contradiction that proves the result.
\end{proof}
In the last non-existence result the contribution of the nonlinearities and the functionals is of a ``mixed'' type.
\begin{thm}\label{noexis3}
Assume that, for $i=1,2$, there exist $A_i,\lambda_i>0$ such that the assumptions in Theorem\,\ref{nonexi1s} are verified for an $i\in\{1,2\}$ and the assumptions in Theorem~\ref{nonexi2s} are verified for  the other index. Then there is no nontrivial solution of the system~\eqref{syst} in $K$.
\end{thm}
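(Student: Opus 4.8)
The plan is to combine the two single-index non-existence arguments already carried out in Theorems~\ref{nonexi1s} and~\ref{nonexi2s} by a case distinction on which component of a putative nontrivial solution is nonzero. Suppose, for contradiction, that $(u,v)\in K$ is a nontrivial fixed point of $T$, so $(u,v)=T(u,v)$ and $\|(u,v)\|>0$. Without loss of generality say the assumptions of Theorem~\ref{nonexi1s} hold for the index $i=2$ (the ``small'' side) and those of Theorem~\ref{nonexi2s} hold for $i=1$ (the ``large'' side); the reverse case is symmetric. Since $(u,v)\neq 0$, at least one of the two components is nonzero, so either $\|u\|>0$ or $\|v\|>0$ (indeed both may be, but we only need one).

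First I would treat the case $\|v\|=\nu>0$. Here I use exactly the estimate from the proof of Theorem~\ref{nonexi1s}: writing $v(t)=\gamma_2(t)H_2[u,v]+F_2(u,v)(t)$, bounding $|\gamma_2(t)|\le\|\gamma_2\|$, using $H_2[u_1,u_2]\le A_2\|u_2\|$, the growth bound $f_2(z_1,z_2)\le\lambda_2 m_2|z_2|$, and $|v(s)|\le\nu$, then taking the supremum over $[0,1]$ and invoking the definition of $1/m_2$ as $\sup_t\int_0^1|k_2(t,s)|g_2(s)\,ds$, we arrive at $\nu\le\nu(\|\gamma_2\|A_2+\lambda_2)$. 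The hypothesis $\|\gamma_2\|A_2+\lambda_2<1$ gives $\nu<\nu$, a contradiction. Hence this case cannot occur, so we must have $\|v\|=0$, i.e. $v\equiv 0$, and consequently $\|u\|>0$.

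Next, with $v\equiv0$ and $\|u\|>0$, membership $u\in K_1$ forces $\min_{t\in[a_1,b_1]}u(t)\ge c_1\|u\|>0$; set $\theta:=\min_{t\in[a_1,b_1]}u(t)>0$. Now I repeat verbatim the argument of Theorem~\ref{nonexi2s} for the first component: from $u(t)=\gamma_1(t)H_1[u,v]+F_1(u,v)(t)$ on $[a_1,b_1]$, using $\gamma_1(t)\ge c_{\gamma_1}\|\gamma_1\|$ there, the lower bound $H_1[u_1,u_2]\ge A_1\|u_1\|$, the growth bound $f_1(z_1,z_2)\ge\lambda_1 M_1 z_1$ valid for $(z_1,z_2)\in[0,+\infty)\times\mathbb{R}$ (so it applies with $z_2=v(s)=0$), the positivity $k_1\ge c_{k_1}\Phi_1\ge0$ on $[a_1,b_1]\times[0,1]$, and $u(s)\ge\theta$ on $[a_1,b_1]$, then taking the infimum over $[a_1,b_1]$ and using the definition of $1/M_1$ as $\inf_{t\in[a_1,b_1]}\int_{a_1}^{b_1}k_1(t,s)g_1(s)\,ds$, we obtain $\theta\ge\theta(c_{\gamma_1}\|\gamma_1\|A_1+\lambda_1)$. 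The hypothesis $c_{\gamma_1}\|\gamma_1\|A_1+\lambda_1>1$ then yields $\theta>\theta$, again a contradiction. Since both cases are impossible, no nontrivial solution exists.

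There is essentially no hard part here: the two key estimates are already established in the preceding two theorems, and the only new ingredient is the observation that a nontrivial solution must have a nonzero component and that one may run the ``small'' estimate on one component and the ``large'' estimate on the other. The one point deserving a line of care is that after ruling out $\|v\|>0$ one should note $v\equiv0$ and then use the cone structure ($u\in K_1$) to pass from $\|u\|>0$ to $\theta=\min_{[a_1,b_1]}u>0$, so that the ``large'' argument of Theorem~\ref{nonexi2s} applies; the growth condition $f_1(z_1,z_2)\ge\lambda_1 M_1 z_1$ is assumed on all of $[0,+\infty)\times\mathbb{R}$, so plugging in $z_2=0$ is legitimate and no compatibility issue arises.
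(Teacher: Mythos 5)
Your proposal is correct and follows essentially the same route as the paper: the paper's own proof is just a terse version of your argument, picking a nonzero component of the putative fixed point and observing that the corresponding index satisfies the hypotheses of one of Theorems~\ref{nonexi1s} or~\ref{nonexi2s}, whose single-component estimates then yield the contradiction. Your only (harmless) extra step is deducing $v\equiv 0$ before treating $u$, which is not needed since the lower growth bound on $f_1$ holds for all $z_2\in\mathbb{R}$, as you yourself note.
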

\begin{proof}
Assume, on the contrary, that there exists a nontrivial $(u,v)\in K$ such that $(u,v)=T(u,v)$ with, for example, $\|u\|\neq 0$. Then
the functions $\gamma_{1}$, $H_{1}$ and $f_1$ satisfy either the assumptions in Theorem\,\ref{nonexi1s} or the assumptions in
Theorem~\ref{nonexi2s} and the proof follows consequently.
\end{proof}
We present an example that illustrates the applicability of Theorem~\ref{noexis3}.
\begin{ex}
Fix in the system~\eqref{syst} $g_1\equiv 1$, $\beta_1=2$ and $\eta=\frac{1}{4}$. By direct calculations, we obtain $m_1=\displaystyle\frac{128}{49}$. Thus the functions
\begin{equation*}
f_1(u,v)=
\begin{cases}
\dfrac{1}{4}u^2\, \arctan\, v^2, &  u\leq 1,\\
\\
 \dfrac{1}{4}u^{2/3}\, \arctan\, v^2,& u>1,
\end{cases}
\,\,\,\mbox{ and }\,\,\,\, H_1[u,v]=\frac{1}{6} u(1/2)\,\left |\sin(v(1/3))\right|
\end{equation*}
satisfy the conditions of Theorem\,\ref{nonexi1s} with $A_1=\frac{1}{6}$ and $\lambda_1=\frac{1}{5}$.

Now fix $g_2\equiv 1$, $\beta_2=\frac{1}{3}$, $\xi=\frac{1}{2}$ and $[a_2,b_2]=\left[\frac{1}{4},\frac{1}{2}\right]$. In this case we obtain $M_2=\displaystyle \frac{32}{3}$. Thus the functions
\begin{equation*}
f_2(u,v)=
\begin{cases}
\dfrac{32}{3}\,(2+\cos u)\, v^{2/3}, &  v\leq 1,\\
\\
 \dfrac{32}{3}\,(2+\cos u)\, v^{2},& v>1,
\end{cases}
\,\,\,\mbox{ and }\,\,\,\, H_2[u,v]=e^{u(1/2)}\,v(1/3)
\end{equation*}
satisfy the conditions of Theorem\,\ref{nonexi2s} with $A_2=\frac{1}{6}$ and $\lambda_2=1$.

With these choices, by Theorem~\ref{noexis3}, the system~\eqref{syst} has no solutions in $K$.
\end{ex}

\section{Perturbing a system with Dirichlet BCs}\label{three}
In~\cite{do6} the authors raised the  question of the existence of multiple positive solutions of the elliptic system~\eqref{do} under more general BCs.

Using our theory we can deal with the system of elliptic equations
\begin{equation}\label{dsyst}
\begin{cases}
\Delta u + h_1(|x|)f_1(u,v)=0,\ |x|\in [1,+\infty), \\
\Delta v +h_2(|x|)f_2(u,v)=0,\, |x|\in [1,+\infty),\\
\end{cases}
\end{equation}
subject to the BCs
\begin{equation}\label{dsystBC1}
\begin{cases}
u(x)=H_1[u,v]\,\,\text{for  }x\in \partial B_{1},\, \,\,\,\,\displaystyle \lim_{|x|\to+\infty}u(|x|)=0, \\
v(x)=H_2[u,v]\,\,\text{for  }x\in \partial B_{1},\,\,\,\,\, \displaystyle\lim_{|x|\to+\infty}v(|x|)=0,
\end{cases}
\end{equation}
or
\begin{equation}\label{dsystBC2}
\begin{cases}
u(x)=0\,\,\text{for  }x\in \partial B_{1},\, \,\,\,\,\displaystyle \lim_{|x|\to+\infty}u(|x|)=H_1[u,v], \\
v(x)=0\,\,\text{for  }x\in \partial B_{1},\,\,\,\,\, \displaystyle\lim_{|x|\to+\infty}v(|x|)=H_2[u,v],
\end{cases}
\end{equation}
or
\begin{equation}\label{dsystBC3}
\begin{cases}
u(x)=H_1[u,v]\,\,\text{for  }x\in \partial B_{1},\, \,\,\,\,\displaystyle \lim_{|x|\to+\infty}u(|x|)=0, \\
v(x)=0\,\,\text{for  }x\in \partial B_{1},\,\,\,\,\, \displaystyle\lim_{|x|\to+\infty}v(|x|)=H_2[u,v].
\end{cases}
\end{equation}

We examine, for example,   the perturbed integral system
\begin{equation}\label{syst2}
\begin{cases}
u(t)=t\, H_{1}[u,v]+\int_{0}^{1}k(t,s)g_1(s)f_1(u(s),v(s))\,ds, \\
v(t)=t\, H_{2}[u,v]+\int_{0}^{1}k(t,s)g_2(s)f_2(u(s),v(s))\,ds,%
\end{cases}
\end{equation}
associated to the system~\eqref{dsyst} with the BCs~\eqref{dsystBC1}.

In this case, the Green's function $k$ is given by
\begin{equation}\label{kdir}
k(t,s):=
\begin{cases}
 s(1-t), &s\le t, \\
  t(1-s),&s>t.
\end{cases}
\end{equation}

In $[0,1]\times  [0,1]$ the kernel $k$ is continuous and non-negative and has been studied, for example, in \cite{jw-gi-jlms}, where it was shown that
\begin{align*}
k(t,s)\leq \Phi(s) \text{ for } &t,\,s \in [0,1], \\
k(t,s) \geq {c}_{k}\Phi(s) \text{ for } &t\in [a,b] \text{ and  } \, s \in [0,1],
\end{align*}%
where
$\Phi(s)= s(1-s)$,  $[a,b]$ is an arbitrary subset of $(0,1)$ and ${c}_{k}=\min\{a, 1-b\}$.
Set $\gamma_1(t)=\gamma_2(t)=:\gamma(t)=t$;
we have $\gamma \in C[0,1]$ and
\begin{equation*}
\gamma (t)\geq {c}_{\gamma}\| \gamma\|\;\text{ for
}\;t\in [a,b],
\end{equation*}%
where
$||\gamma||=1$ and ${c}_{\gamma}=a$.

Due to the properties above, we are able to work in the cone of non-negative functions $K:=\tilde{K}\times \tilde{K}$ in $C[0,1]\times C[0,1]$,
where
\begin{equation*}
\tilde{K}=\{w\in C[0,1]:\,w\geq 0,\,\,\min_{t\in [a,b]}w(t)\geq {c}\|
w\|\},
\end{equation*}%
with  $c=\min \{{c}_{k},{c}_{\gamma}\}$.
Our theory works in this case with suitable changes.

For example (with abuse of notation), in condition~\eqref{ind1s} of Lemma~\ref{ind1L} we would consider
\begin{align*}
f_{i}^{\rho_1,\rho_2}:=\sup \Bigl\{\frac{f_{i}(u,v)}{\rho_i}:\;(u,v)\in [ 0,\rho_1 ]\times [ 0,\rho_2 ]\Bigr\},\,\,\,
\frac{1}{m_{i}}:=\sup_{t\in \lbrack 0,1]}\int_{0}^{1}k(t,s)g_{i}(s)\,ds,
\end{align*}
and in condition \eqref{indos} of Lemma~\ref{idx0n1} we would have
 \begin{equation*}
 f_{1,(\rho_1,\rho_2)}=\inf \Bigl\{ \frac{f_1(u,v)}{ \rho_1}:\; (u,v)\in [\rho_1,\rho_1/c]\times[0, \rho_2/c]\Bigr\}.
\end{equation*}

In the next example we briefly illustrate the constants involved in our theory.
\begin{ex}
Consider  in $\mathbb{R}^3$  the elliptic system
\begin{equation}\label{ellbvpdir}
\begin{cases}
\Delta u + |x|^{-4}\left(u^3+v^2+\frac{1}{2}\right)=0,\,\, |x|\in [1,+\infty), \\
\Delta v + |x|^{-4}\left(\displaystyle\frac{\sqrt{u}}{2}+v^2\right)=0,\,\, |x|\in [1,+\infty),\\
u(x)=\dfrac{1}{10}+\displaystyle\frac{\sqrt{v(2x)}}{2 \sqrt{5}}\,\,\,\mbox{ for }{x\in \partial B_{1}},\,\, \displaystyle \lim_{|x|\to+\infty}u(|x|)=0,\\
v(x)=\dfrac{1}{10}+\displaystyle\frac{u^2(3x)}{20}\,\,\,\,\,\mbox{ for }{x\in \partial B_{1}},\,\,\displaystyle \lim_{|x|\to+\infty}v(|x|)= 0.
\end{cases}
\end{equation}

To the system~\eqref{ellbvpdir} we associate the system of second order ODEs
\begin{equation*}
\begin{cases}
u''(t)+ u^3(t)+v^2(t)+\frac{1}{2}=0,\,\, t\in [0,1], \\
v''(t) +\displaystyle\frac{\sqrt{u(t)}}{2}+v^2(t)=0,\,\, t\in [0,1],\\
u(0)=0,\,\,\,\,\,\; u(1)=\dfrac{1}{10}+\displaystyle\frac{\sqrt{v(1/2)}}{2 \sqrt{5}},\\
v(0)=0,\;\,\,\,\,\, v(1)=\dfrac{1}{10}+\displaystyle\frac{u^2(1/3)}{20},%
\end{cases}
\end{equation*}
and the perturbed integral system
\begin{equation}\label{systdir}
\begin{cases}
u(t)=t\, H_{1}[u,v]+\int_{0}^{1}k(t,s)f_1(u(s),v(s))\,ds, \\
v(t)=t\, H_{2}[u,v]+\int_{0}^{1}k(t,s)f_2(u(s),v(s))\,ds,%
\end{cases}
\end{equation}
with
$H_1[u,v]=\displaystyle\frac{1}{10}+\frac{\sqrt{v(1/2)}}{2
\sqrt{5}}$ and
$H_2[u,v]=\displaystyle\frac{1}{10}+\frac{u^2(1/3)}{20}$.\\ By
direct computation, we have
\begin{equation*}
 \frac{1}{m}=\sup_{t\in [0,1]}\int_{0}^{1} k(t,s)\,ds=\frac{1}{8}.
 \end{equation*}{}
We fix $[a,b]=[\frac{1}{4},\frac{3}{4}]$, obtaining $c=\frac{1}{4}$,
$$
 \frac{1}{M}: = \inf_{t\in
[1/4,3/4]}\int_{1/4}^{3/4} k(t,s)\,ds=\frac{1}{16}.
$$
With the choice of
\begin{align*}
&\rho_1=\frac{1}{39},\,\,\,\rho_2=\frac{1}{10},\,\,\,A_1^{\rho_1,\rho_2}=\frac{1}{10},\,\,\,\,\alpha^{\rho_1,\rho_2}_{12}[v]=\frac{1}{2\sqrt{5}}v(1/2),\\
&r_1=2,\,\,\, r_2=2,\,\,\, A_1^{r_1,r_2}=\frac{1}{10}+\frac{1}{2\sqrt{5}}\sqrt{2},\,\,\, A_2^{r_1,r_2}=\frac{1}{10},\,\,\,\,\alpha^{r_1,r_2}_{21}[u]=\frac{1}{10}u(1/3),\,\\
&s_1=5,\,\,\,s_2=16,\,\,
\,A_1^{s_1,s_2}=\frac{1}{10},\,\,\,\,\alpha^{s_1,s_2}_{12}[v]=\frac{1}{16\sqrt{5}}v(1/2),\,
\,\,\, A_2^{s_1,s_2}=\frac{1}{10},
\end{align*}
one can verify that
\begin{align*}
&H_{1}[u,v]\geq\, A_1^{\rho_1,\rho_2}+\alpha^{\rho_1,\rho_2}_{12}[v],\,\,\,(u,v)\in [0,4\rho_1]\times[0,4\rho_2],\,\\
&H_{1}[u,v]\leq\, A_1^{r_1,r_2},\,\,\,H_{2}[u,v]\leq\, A_2^{r_1,r_2}+\alpha^{r_1,r_2}_{21}[u],\,\,\,(u,v)\in [0,r_1]\times[0,r_2],\,\\
&H_{1}[u,v]\geq A_1^{s_1,s_2}+
\alpha^{s_1,s_2}_{12}[v],\,\,\,
(u,v)\in [s_1,4s_1]\times[0,4s_2],\,\\
&H_{2}[u,v]\geq A_2^{s_1,s_2},\,\,\,(u,v)\in [0,4s_1]\times[s_2,4s_2],\,\\
\inf & \Bigl\{ f_1(u,v):\; (u,v)\in [0,4\rho_1]\times[0,4\rho_2]
\Bigr\}= f_2(0,0)=0.5>0.01, \\
\sup &\Bigl\{ f_1(u,v):\; (u,v)\in [0, r_1]\times[0,
r_2]\Bigr\}=f_1(2,2)=12.5
<12.67   , \\
\sup &\Bigl\{ f_2(u,v):\; (u,v)\in [0, r_1]\times[0,
r_2]\Bigr\}=f_2(2,2)=4.70
<13.6, \\
\inf &\Bigl\{ f_1(u,v):\; (u,v)\in
[s_1,4s_1]\times[0,4s_2]\Bigr\}=f_1(5,0)=125.5
>78, \\
\inf &\Bigl\{ f_2(u,v):\; (u,v)\in [0,4s_1]\times[s_2,
4s_2]\Bigr\}=f_2(0,16)=256 >171.6.
\end{align*}
It follows that the conditions $(\mathrm{I}^{0}_{\rho_{1},\rho_2})^{\circ}$, $(\mathrm{I}%
^{1}_{r_1,r_{2}})$ and $(\mathrm{I}^{0}_{s_1,s_{2}})$ are
satisfied and therefore the system~\eqref{systdir} has at least two
positive solutions.
\end{ex}
\section*{Acknowledgments}
G. Infante and P. Pietramala were partially supported by G.N.A.M.P.A. - INdAM (Italy). F. Cianciaruso is a member of  G.N.A.M.P.A.

\end{document}